\let\originalleft\left
\let\originalright\right
\renewcommand{\left}{\mathopen{}\mathclose\bgroup\originalleft}
\renewcommand{\right}{\aftergroup\egroup\originalright}
\newcommand{\bwedge}{\raisebox{0.3ex}{${\scriptstyle\bigwedge\,}$}}
\newcommand{\boplus}{\raisebox{0.2ex}{${\textstyle \bigoplus}$}}
\newcommand{\smwedge}{{\scriptstyle \;\wedge\;}}
\newcommand{\ordinal}[1]{[$\;\!#1\,$]}
\newcommand{\R}{\mathbb{R}}
\newcommand{\Z}{\mathbb{Z}}
\newcommand{\N}{\mathbb{N}}
\newcommand{\NN}{\mathbb{N}_0}
\newcommand{\I}{\infty}
\newcommand{\BIGOP}[1]{\mathop{\mathchoice
{\raise-0.22em\hbox{\huge $#1$}}
{\raise-0.05em\hbox{\Large $#1$}}{\hbox{\large $#1$}}{#1}}}
\def\clap#1{\hbox to 0pt{\hss#1\hss}}
\theoremstyle{plain}
\newtheorem{theorem}{Theorem}[section]
\newtheorem{proposition}[theorem]{Proposition}
\newtheorem{corollary}[theorem]{Corollary}
\newtheorem{definition}[theorem]{Definition}
\theoremstyle{remark}
\newtheorem*{remark}{Remark}
\newtheorem{example}{Example}
\title{Higher Symplectic Structure on torsionless Lie Rinehart pairs}
\author{Mirco Richter }
\date {\today}
\thanks{email: \href{mailto:mirco.richter@email.de}{\tt mirco.richter@email.de}}
\begin{document}

%**********Abstract**********
\maketitle
\begin{abstract}
We define an n-plectic structure as a commutative and torsionless
Lie Rinehart pair, together with a distinguished cocycle 
from its Chevalley-Eilenberg complex. 

This 'n-plectic cocycle' gives rise to an extension of the 
Chevalley-Eilenberg complex by so called symplectic tensors.
The cohomology of this extension generalizes Hamiltonian 
functions and vector fields to tensors and cotensors in a 
range of degrees, up to certain coboundaries and
has the structure of a Lie $\I$-algebra.

Finally we show, that momentum maps appear in this context just
as weak Lie $\infty$-morphisms.
\end{abstract}

\section{Introduction}
We define n-plectic structures as commutative 
and torsionless Lie Rinehart pairs, together with a distinguished 
$(n+1)$-cocycle from their Chevalley-Eilenberg complexes. 

By torsionless we mean, that the Lie algebra partner of the 
pair is a torsionless module with respect to its commutative partner. 
The natural pairing between tensors and cotensors is then non-degenerate
and we can define general Cartan calculus.

In contrast to symplectic geometry, where the symplectic form has to
be non-degenerate on vectors, we do not impose additional properties
on the $n$-plectic cotensor, other than being a cocycle. Since we deal 
with tensors of arbitrary degree almost all n-plectic cocycles will 
be degenerate to some extend on higher tensors and therefore 
we drop any distinction between degenerate and non-degenerate right 
from the beginning.

The theory in this paper appears as a refinement of the work from Rogers on
$n$-plectic manifolds as given in \cite{CR} and the
Lie $\I$-algebra of Lie Rinehart pairs as given by me in \cite{MR}. 
In fact the papers \cite{FRZ} and \cite{FRS} have been a important inspiration.
 
The basic generalization however is, that we do not restrict to Hamiltonian 
(or symplectic) vectors, but consider appropriate tensors of 
arbitrary degree instead. This is possible, since the $n$-plectic structure 
behaves well with respect to the Lie $\I$-algebra of exterior tensors 
and we can define higher Poisson brackets as some kind of 
Lie $\I$-algebra extension.

A main concerns of this work is to convince the reader, that
exterior (co)tensors of higher degrees are a central part
of the theory, not only as a need from physics but for pure mathematical
reasons. In fact this idea has a long history, as 
it appears first in the work of Kanatchikov
(see \cite{K1}, \cite{K2}) and was then pushed further by Forger, Paufler and R\"omer
as in \cite{FPR} and the references therin.
 
In addition we show, that n-plectic geometry can 
be defined verbatim on arbitrary torsionless Lie Rinehart pairs without
much additional afford.
\section{Lie Rinehart pairs}We start with a short introduction
to Lie Rinehart pairs, which have the additional property, 
that the natural map from the Lie partner into its double dual module is 
injective. We call them torsionless or semi-reflexive. In analogy 
to the prototypical Lie Rinehart pair of smooth functions and vector 
fields, these pairs allow for a general definition of 
exterior derivative, contraction and Lie derivative.
 
In what follows $\mathfrak{g}$ will always be a real Lie algebra, that is a $\R$-vector space together with an antisymmetric, bilinear map,
\begin{equation}
[\cdot,\cdot]: \mathfrak{g}\times\mathfrak{g}\to\mathfrak{g}
\end{equation}
called \textbf{Lie bracket}, such that for any three vector $x_1$, $x_2$ and $x_3\in\mathfrak{g}$ the \textbf{Jacobi identity}
$[x_1,[x_2,x_3]]+[x_2,[x_3,x_1]]+[x_3,[x_1,x_2]]=0$ is satisfied.

In addition $A$ will always be a real
associative and commutative algebra with unit, 
that is a $\R$-vector space together with an associative and commutative, bilinear map
\begin{equation}
\cdot : A \times A \to A
\end{equation}
called multiplication and a unit $1_A\in A$.
According to a better readable text, 
we frequently suppress the symbol of the multiplication in $A$ and just
write $ab$ instead of $a\cdot b$.

Moreover $Der(A)$ will be the Lie algebra of derivations of $A$, 
that is the vector space of linear endomorphisms of $A$, 
with $D(ab)=D(a)b+aD(b)$ and Lie bracket
$[D,D'](a):=D(D'(a))-D'(D(a))$ for any $a,b\in A$ and $D,D'\in Der(A)$.

Before we get to Lie Rinehart pairs, it is handy to define Lie algebra modules first:
\begin{definition}[Lie algebra module]
Let $\mathfrak{g}$ be a real Lie algebra, $A$ an $\R$-algebra and
$D:\mathfrak{g}\to Der(A)$ a Lie algebra morphism. Then $A$ is called
a \textbf{Lie algebra module} (or just $\mathfrak{g}$-module) and $D$
is called the $\mathfrak{g}$\textbf{-scalar multiplication}.
\end{definition}
Now a \textit{Lie Rinehart pair} is nothing but a Lie algebra 
and an associative algebra, 
each of them being a module with respect to the other, such that 
a particular compatibility equation of their products is satisfied: 
\begin{definition}[Lie Rinehart Pair] Let $A$ be an associative and 
commutative algebra with unit, $\mathfrak{g}$ a Lie algebra and 
$\cdot_A: A\times \mathfrak{g} \to \mathfrak{g}$ as well as
$D: \mathfrak{g}\to Der(A)\;;\; x\mapsto D_x$ maps, such that 
$A$ is a $\mathfrak{g}$-module with $\mathfrak{g}$-scalar multiplication
$D$, the vector space $\mathfrak{g}$ is
an $A$-module with $A$-scalar multiplication $\cdot_A$ and 
the \textbf{Leibniz rule}
\begin{equation}
[x,a\cdot_A y] = D_x(a)\cdot_A y + a \cdot_A[x,y]
\end{equation}
is satisfied for any $x,y\in\mathfrak{g}$ and $a\in A$. Then
$\left(A,\mathfrak{g}\right)$ is called a \textbf{Lie Rinehart pair}. 
\end{definition}
Maybe the most prominent example is provided by smooth functions and vector fields on a 
differentiable manifold:
\begin{example}
Let $M$ be a differentiable manifold, $C^\I(M)$ the algebra of smooth, 
real valued functions and $\mathfrak{X}(M)$ the Lie algebra of vector
fields on $M$. $\mathfrak{X}(M)$ is a $C^\I(M)$-module and 
vector fields acts as derivations on smooth functions, that is the
map
$$D:\mathfrak{X}(M)\times C^\I(M) \to C^\I(M)\;;\;
 (X,f) \mapsto D_X(f):= X(f)
$$
satisfies the equation
$D_X(fg)=D_X(f)g + fD_X(g)$. Moreover the Leibniz rule
$[X,fY]=D_X(f)Y+f[X,Y]$ holds and it follows that
$(C^\I(M),\mathfrak{X}(M))$ is a Lie Rinehart pair.
\end{example} 
Morphisms of Lie Rinehart pairs are pairs
of appropriate algebra maps, which interact properly with
respect to the additional module structures \cite{GR}:
\begin{definition}[Lie Rinehart Morphism]
Let $(A,\mathfrak{g})$ and $(B,\mathfrak{h})$ be two Lie Rinehart pairs.
A \textbf{morphism of Lie Rinehart pairs} is a pair of maps $(f,g)$, such
that $f:A\to B$ is a morphism of associative and commutative, real algebras
with unit, $g:\mathfrak{g}\to\mathfrak{h}$ is a morphism of Lie
algebras and the equations
\begin{equation}\label{LR-morph}
\begin{array}{ccc}
g(a\cdot_A x)=f(a)\cdot_B g(x)
& and &
f(D_x(a))=D_{g(x)}(f(a))
\end{array}
\end{equation} 
are satisfied for any $a\in A$ and $x\in \mathfrak{g}$.  
\end{definition}
The Lie structure of $\mathfrak{g}$
can be extended into a graded\footnote{The term 'graded' will 
always mean $\Z$-graded. See the appendix for an introduction to $\Z$-graded 
vector spaces.}
Lie algebra on the
direct sum of the partners, concentrated in degrees zero and one. 
This differs from \cite{GR}, where no grading was
considered.
\begin{definition}[Associated Lie algebra]
Let $(A,\mathfrak{g})$ be a Lie Rinehart pair. Its 
\textbf{associated} (graded) Lie algebra is the graded direct
sum $A\oplus\mathfrak{g}$, where $A$ is concentrated in degree zero, 
$\mathfrak{g}$ is concentrated
in degree one and the Lie bracket is defined by
\begin{equation}
\begin{array}{crcl}
[\cdot,\cdot]: & A\oplus\mathfrak{g} \times A\oplus\mathfrak{g} &\to&
 A\oplus\mathfrak{g}\\
 & \left((a,x),(b,y)\right) &\mapsto& \left(D_x(a)+D_y(b),[x,y]\right)\;.
\end{array}
\end{equation}
\end{definition} 
In particular this means, that we can talk about Lie $\I$-morphisms in 
the context of Lie Rinehart pairs.

To define torsionless Lie Rinehart pairs, let 
$\mathfrak{g}^\vee_A:=Hom_{A}(\mathfrak{g},A)$ be the dual of 
the $A$-module $\mathfrak{g}$ and $\mathfrak{g}^{\vee\vee}$ its appropriate double
dual. 
\begin{definition}[Torsionless Lie Rinehart pair]
Let $(A,\mathfrak{g})$ be a Lie Rinehart pair, such that the natural map
$\mathfrak{g}\to \mathfrak{g}^{\vee\vee}\;;\;
x\mapsto \left(\mathfrak{g}^\vee \to A\;;\; f\mapsto f(x)\right)$ is 
injective. Then $(A,\mathfrak{g})$ is called \textbf{torsionless} or 
\textbf{semi-reflexive}.
\end{definition}
Torsionless Lie Rinehart pairs gives rise to a non degenerate pairing between 
vectors and covectors, which in turn allows to consider Cartan calculus. 
To see that we define $\otimes^0_A\mathfrak{g}:=A$ 
as well as $\otimes^0_A\mathfrak{g}^\vee_A:=A$ and 
write $\otimes^n_A\mathfrak{g}$ as well as 
$\otimes^n_A\mathfrak{g}^\vee_A$ for the 
appropriate $n$-fold $A$-tensor products of the $A$-modules 
$\mathfrak{g}$ and $\mathfrak{g}^\vee$. Since $A$ is commutative, 
$\otimes^n_A\mathfrak{g}$ and $\otimes^n_A\mathfrak{g}_A^\vee$ 
are $A$-modules.
\newpage
\begin{definition}[Exterior Algebras]\label{tensor_grading} 
Let $(A,\mathfrak{g})$ be a Lie Rinehart pair and 
$n\in\Z$. For $n<0$ define $E^n(A,\mathfrak{g})=\{0\}$ and for $n\geq 0$
let $E^n(A,\mathfrak{g}):=\otimes^n_A\mathfrak{g}/J^n$ be
the quotient $A$-module of the $n$-th tensor product and the submodule $J^n$, 
spanned by all $x_1\otimes\cdots\otimes x_n$ with 
$x_i = x_j$ for some $i = j$. Then the direct sum 
\begin{equation}
E(A,\mathfrak{g}):=\textstyle\bigoplus_{n\in\Z} E^n(A,\mathfrak{g})
\end{equation}
together with the quotient 
$\smwedge: E(A,\mathfrak{g}) \times E(A,\mathfrak{g}) \mapsto
E(A,\mathfrak{g})\;;\; (x,y)\mapsto x\smwedge y$ of the
$A$-tensor multiplication, is called the \textbf{exterior tensor algebra} of 
$(A,\mathfrak{g})$ and the product is called the \textbf{exterior tensor product}. 

Dual if $n<0$ define $E_{-n}(A,\mathfrak{g}^\vee)=\{0\}$ and if $n\geq 0$ let 
 $E_{-n}(A,\mathfrak{g}^\vee):=\otimes^n_A\mathfrak{g}^\vee_A/J^n$ be
the quotient $A$-module of the $n$-th tensor product of the dual module
and the submodule $J^n$, spanned by all cotensors $x^1\otimes\cdots\otimes x^n$ with 
$x^i = x^j$ for some $i = j$. Then the direct sum 
\begin{equation}
E(A,\mathfrak{g}^\vee)=\textstyle\bigoplus_{n\in\Z} E_{-n}(A,\mathfrak{g}^\vee)
\end{equation}
together with the quotient 
$\smwedge: E(A,\mathfrak{g}^\vee) \times E(A,\mathfrak{g}^\vee) \mapsto
E(A,\mathfrak{g}^\vee)\;;\; (x,y)\mapsto x\smwedge y$ of the
$A$-tensor multiplication, is called the \textbf{exterior cotensor algebra} of 
$(A,\mathfrak{g})$ and the product is called the \textbf{exterior cotensor product}. 
\end{definition}
\begin{example}
If $(C^\I(M),\mathfrak{X}(M))$ is the Lie Rinehart pair of smooth 
functions and vector fields, the exterior cotensor algebra 
is the algebra of \textbf{differential forms} $\Omega(M)$ and
the exterior tensor algebra is the algebra of \textbf{multivector fields}
$\bwedge\mathfrak{X}(M)$.
\end{example}
A tensor $x\in E^n(A,\mathfrak{g})$ is called homogeneous of tensor
degree $n$ (written as $|x|=n$) and a cotensor 
$f\in E_{-n}(A,\mathfrak{g}^\vee)$ is
called homogeneous of tensor degree $-n$ (written as $|f|=-n$) for any
$n\in\N$. The appropriate grading 
is called the \textbf{tensor grading}. Note that tensors of negative degrees 
as well as cotensors of positive degrees are zero and
$E_0(A,\mathfrak{g}^\vee)\simeq A\simeq E^0(A,\mathfrak{g})$ as well as 
$E^1(A,\mathfrak{g})\simeq \mathfrak{g}$ and 
$E_{-1}(A,\mathfrak{g}^\vee)\simeq \mathfrak{g}_A^\vee$.

This grading is a natural choice in the context of $\Z$-graded modules,
since the usual pairing of tensors and cotensors, appears as a 
graded map, homogeneous of degree zero:
\begin{definition}[Pairing]Let $(A,\mathfrak{g})$
be a Lie Rinehart pair. The $A$-bilinear map
\begin{equation}
\langle\cdot,\cdot\rangle:E(A,\mathfrak{g}^\vee)\times E(A,\mathfrak{g})\to A,
\end{equation}
defined by $\langle a,b\rangle=ab$ on scalars $a,b\in A$, by
$\langle f,x\rangle=0$ on cotensors $f\in E_p(A,\mathfrak{g}^\vee)$ and
tensors $x\in E^q(A,\mathfrak{g})$, such that $p+q\neq 0$ and by
$$
\langle f^1\smwedge\cdots\smwedge f^n, x_1\smwedge\cdots\smwedge x_n\rangle=
det(f^j(x_i))
$$
on simple cotensors $f^1\smwedge\cdots\smwedge f^n\in E_{-n}(A,\mathfrak{g})$
and tensors $x_1\smwedge\cdots\smwedge x_n\in E^n(A,\mathfrak{g})$
and then extended to all of 
$E_{-n}(A,\mathfrak{g}^\vee)\times E^n(A,\mathfrak{g})$ 
by $A$-linearity, is called the \textbf{natural pairing}.
\end{definition}
\begin{proposition}Let $(A,\mathfrak{g})$ be a torsionless Lie Rinehart pair. Then
the natural pairing is non degenerate.
\end{proposition}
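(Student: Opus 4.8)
The plan is to strip tensor degrees off one at a time by means of interior products, and thereby reduce the whole statement to the degree‑one pairing $\mathfrak{g}^\vee_A\times\mathfrak{g}\to A$, where the torsionless hypothesis is used (this is also, in the end, the content needed for Cartan calculus --- the ``non degenerate pairing between vectors and covectors'' mentioned in the introduction). As a preliminary I would pin down what is being asked: since $\langle\cdot,\cdot\rangle$ is $A$‑bilinear, vanishes between components whose tensor degrees do not sum to zero, and is otherwise graded of degree zero, it suffices to prove that for every $n\ge 0$ a homogeneous cotensor $f\in E_{-n}(A,\mathfrak{g}^\vee)$ with $\langle f,x\rangle=0$ for all $x\in E^n(A,\mathfrak{g})$ must vanish, and symmetrically for tensors in $E^n(A,\mathfrak{g})$.

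The first step is to introduce the two families of interior products: for $\phi\in\mathfrak{g}^\vee_A$ the unique $A$‑linear antiderivation $\iota_\phi$ of tensor degree $-1$ on $E(A,\mathfrak{g})$ with $\iota_\phi x=\phi(x)$ on $E^1(A,\mathfrak{g})=\mathfrak{g}$ and $\iota_\phi=0$ on $E^0(A,\mathfrak{g})=A$, and dually $\iota_y$ on $E(A,\mathfrak{g}^\vee)$ for $y\in\mathfrak{g}$; one checks in the routine way that an antiderivation of degree $-1$ preserves the exterior ideal, so these descend to the quotients $E^\bullet$ and $E_{-\bullet}$. Expanding the determinant in the definition of the natural pairing along its first row, respectively its first column, then yields the adjunction identities $\langle\phi\wedge g,\,x\rangle=\langle g,\,\iota_\phi x\rangle$ and $\langle f,\,y\wedge x\rangle=\langle\iota_y f,\,x\rangle$ on simple (co)tensors, hence everywhere by $A$‑bilinearity.

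The induction on tensor degree then runs as follows. For $n=0$ the pairing is multiplication in $A$ tested against $1_A$, so it is non degenerate. For $n=1$ one has $\langle f,x\rangle=f(x)$: if this vanishes for all $x$ then $f$ is the zero homomorphism; if it vanishes for all $f\in\mathfrak{g}^\vee_A$ at a fixed $x$, then the image of $x$ in $\mathfrak{g}^{\vee\vee}$ is zero, so $x=0$ --- and this single implication is the one and only place the torsionless assumption enters. For the step from degree $n-1$ to degree $n$: if $f\in E_{-n}(A,\mathfrak{g}^\vee)$ pairs to zero with all of $E^n(A,\mathfrak{g})$, then, since the elements $y\wedge x$ with $y\in\mathfrak{g}$ and $x\in E^{n-1}(A,\mathfrak{g})$ generate $E^n(A,\mathfrak{g})$ over $A$, the second adjunction identity and the inductive hypothesis in degree $n-1$ force $\iota_y f=0$ for every $y\in\mathfrak{g}$; symmetrically, a tensor in the other kernel satisfies $\iota_\phi x=0$ for every $\phi\in\mathfrak{g}^\vee_A$.

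What remains --- and this is the step I expect to be the real obstacle --- is to pass from ``all interior products of $z$ vanish'' back to ``$z=0$'' in tensor degree $n\ge 2$. In degree one this is precisely torsionlessness (respectively a triviality), but in higher degrees it is genuinely delicate: exterior powers are only right exact, and the exterior powers of a torsionless module need not again be torsionless, so the naive induction does not close by itself. I would try to repair it by choosing a finite presentation of $z$, passing to the sub‑pair generated by the finitely many (co)vectors occurring in it and controlling the exterior powers there; failing that, one seems to need to strengthen the hypothesis, or to prove directly --- using only the non‑degenerate degree‑one pairing --- that iterated contraction by covectors separates the points of each $E^n(A,\mathfrak{g})$, and dually, from which non‑degeneracy of the whole pairing would follow formally.
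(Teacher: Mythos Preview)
The paper's own proof is the single line ``This follows since the natural map $\mathfrak{g}\to\mathfrak{g}^{\vee\vee}$ is injective'' --- a restatement of the hypothesis, not an argument. Your reduction to degree one via interior products is the right shape, and you are correct that the crux is the passage from ``all single contractions of $z$ vanish'' to ``$z=0$'' in tensor degree $n\ge 2$.

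That passage is not merely delicate; it fails in the stated generality, and with it the proposition. Take $A=k[x,y]$ and $\mathfrak{g}=(x,y)\subset A$, with zero Lie bracket and zero anchor $D=0$; this is a torsionless Lie--Rinehart pair (an ideal in a domain is a torsionless module). From the presentation $0\to A\to A^2\to\mathfrak{g}\to 0$, with the first map $1\mapsto(y,-x)$, one finds that every $A$-linear map $(x,y)\to A$ is multiplication by some $c\in A$, so $\mathfrak{g}^\vee_A\cong A$ and hence $E_{-2}(A,\mathfrak{g}^\vee)=\bigwedge^2_A A=0$. On the other side the same presentation gives $E^2(A,\mathfrak{g})\cong\bigwedge^2_A A^2/\bigl((y e_1-x e_2)\wedge A^2\bigr)\cong A/(x,y)=k$, so $x\wedge y\neq 0$. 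The degree-two pairing is therefore identically zero on a nonzero module, and the natural pairing is degenerate. Your instinct that one must ``strengthen the hypothesis'' is thus exactly right: what actually makes the subsequent Cartan calculus work is the assumption --- true in the motivating example of vector fields on a manifold, and tacitly in force throughout the paper --- that $\mathfrak{g}$ is finitely generated projective over $A$, in which case $\bigwedge^n_A\mathfrak{g}^\vee\cong(\bigwedge^n_A\mathfrak{g})^\vee$ and non-degeneracy is standard.
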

\begin{proof}
This follows since the natural map $\mathfrak{g}\to \mathfrak{g}^{\vee\vee}$ 
is injective.
\end{proof}
With a non degenerate pairing the contraction of a cotensor along a tensor
can be defined consistently.
\begin{definition}[Contraction]
Let $(A,\mathfrak{g})$ be a torsionless Lie Rinehart pair and
$x\in E^k(A,\mathfrak{g})$ a tensor homogeneous of degree $k$. Then
the map
\begin{equation}
i_x:E_{-n}(A,\mathfrak{g}^\vee)\to E_{k-n}(A,\mathfrak{g}^\vee)
\end{equation}
uniquely defined for any $n\in\NN$ and homogeneous cotensor 
$f\in E_{-n}(A,\mathfrak{g}^\vee)$ by the equation
$$
\langle i_xf, y \rangle = \langle f, x\smwedge y\rangle
$$
for all $y\in E^{n-k}(A,\mathfrak{g})$ and then extended to all of 
$E(A,\mathfrak{g}^\vee)$ by additivity, is called the 
\textbf{contraction along }$x$. 
\end{definition}
Since the natural pairing is non degenerate, the contraction is uniquely defined
by this equation.

The exterior tensor power of a Lie Rinehart pair has the structure of a 
Lie $\I$-algebra \cite{MR}, with higher operators defined in terms of the exterior 
product and the Schouten-Nijenhuis bracket.
\begin{definition}[Higher Lie Brackets]
Let $(A,\mathfrak{g})$ be a Lie
Rinehart pair with exterior algebra $E(A,\mathfrak{g})$ and 
$[\cdot,\cdot]_S$ the antisymmetric Schouten-Nijenhuis bracket. Then the tensor
\textbf{Lie k-bracket}
\begin{equation}\label{Higher_Lie_bracket}
[\cdot,\cdots,\cdot]_k: E(A,\mathfrak{g}) \times \cdots \times
 E(A,\mathfrak{g}) \to E(A,\mathfrak{g})
\end{equation}
is defined for any integer $k \geq 2$ and homogeneous tensors
$x_1,\ldots,x_k\in E(A,\mathfrak{g})$ by
\begin{multline*}
[x_1,\ldots,x_k]_k:=\\ \textstyle\sum_{s\in Sh(2,k-2)}e(s;x_1,\ldots,x_k)e(x_{s(1)})\;
x_{s(k)}\smwedge\cdots\smwedge x_{s(3)}\smwedge[x_{s(2)},x_{s(1)}]_S
\end{multline*}
and is then extended to all of $E(A,\mathfrak{g})$ by $A$-additivity.
\end{definition}
These higher tensor brackets are graded symmetric and homogeneous of degree
$-1$ with respect to the tensor grading.
The unary bracket $[\,\cdot\,]_1$ has to be the zero operator in
general \cite{MR} and
$\left(E(A,\mathfrak{g}),[\cdot,\ldots,\cdot]_{k\in\N}\right)$
is a Lie $\I$-algebra, concentrated in non negative (tensor) degrees. 

In what follows we will always write $[\cdot,\ldots,\cdot]_k$ for the 
graded symmetric higher tensor brackets as well as $[\cdot,\cdot]_S$ for the
common Schouten-Nijenhuis bracket. 

The natural inclusion 
$A\oplus\mathfrak{g}\hookrightarrow E(A,\mathfrak{g})\;;\;
(a,x)\mapsto(a,x)$ 
is not a morphism of Lie $\I$-algebras, 
since such a strict morphism has to commute with all higher brackets, 
but these brackets are zero on $A\oplus\mathfrak{g}$.
Instead the inclusion now comes as a weak morphism of Lie $\I$-algebras: 
\begin{definition}Let $(A,\mathfrak{g})$ be a Lie Rinehart pair with 
associated graded Lie algebra $A\oplus\mathfrak{g}$,
exterior tensor algebra $E(A,\mathfrak{g})$ and 
\begin{equation}
\begin{array}{crcl}
i_k: & A\oplus\mathfrak{g}\times \cdots \times A\oplus\mathfrak{g} &\to &
 E(A,\mathfrak{g})\\
     &   (x_1,\ldots,x_k) & \mapsto &
 (-1)^{k-1}(k-1)\,!\cdot x_k\smwedge\cdots\smwedge x_1
\end{array}
\end{equation}
for any $k\in \N$. Then the sequence $i_\I:=(i_k)_{k\in\N}$ is called the
\textbf{natural inclusion} of the Lie Rinehart pair into its exterior 
tensor Lie $\I$-algebra.
\end{definition} 
On the other side, the exterior \textit{cotensor} algebra has
the structure of a differential graded algebra, at least if the
Lie Rinehart pair is torsionless. This generalizes the well
known De Rham Complex of differential forms to the exterior
cotensor algebra of such Lie Rinehart
pairs:
\begin{definition}[Exterior derivative]
Let $(A,\mathfrak{g})$ be a torsionless Lie Rinehart pair with exterior 
cotensor algebra $E(A,\mathfrak{g}^\vee)$. Then the \textbf{exterior derivative}
$$
d:E(A,\mathfrak{g}^\vee)\to E(A,\mathfrak{g}^\vee)
$$
is defined for a homogeneous cotensor $f\in E_{-k}(A,\mathfrak{g}^\vee)$ 
and any $x_0,\ldots,x_k\in \mathfrak{g}$ by the equation
\begin{multline*}
df(x_0,\ldots,x_k) 
= \textstyle\sum_j (-1)^{j} 
 D_{x_j}(f(x_0, \ldots, \widehat{x}_j, \ldots,x_k))\\
+\textstyle\sum_{i<j}
 (-1)^{i+j}f([x_i, x_j], x_0, \ldots, \widehat{x}_i, \ldots, 
  \widehat{x}_j, \ldots, x_k)
\end{multline*}
and is then extended to all of $E(A,\mathfrak{g}^\vee)$ by $A$-linearity. 

The complex $\left(E(A,\mathfrak{g}^\vee),d\right)$ is called the 
\textbf{Chevalley-Eilenberg complex} of the Lie Rinehart pair.
\end{definition}
Since the natural pairing is non degenerate, the exterior derivative is
uniquely defined by this equation and is moreover a graded map, 
homogeneous of degree $-1$, with $d^2=0$. 
In particular the Chevalley-Eilenberg complex is a
cochain complex with respect to the tensor grading.

With the contraction along tensors and the exterior
derivative, we can now define Lie derivatives along tensors in terms of the 
graded Cartan formula: 
\begin{definition}Let $(A,\mathfrak{g})$ be a torsionless Lie Rinehart pair and
$x\in E(A,\mathfrak{g})$ a homogeneous tensor. Then the map
\begin{equation}\label{Cartans_formula}
L_x: E(A,\mathfrak{g}^\vee)\to E(A,\mathfrak{g}^\vee)\;;\;
f\mapsto di_xf - (-1)^{|x|}i_xdf
\end{equation}
is called the \textbf{Lie derivative} along $x$.
\end{definition}
The following proposition summarizes basic computation rules as we need them in
what follows:
\begin{proposition}Let $(A,\mathfrak{g})$ be a torsionless Lie Rinehart 
pair, $[\cdot,\cdot]_S$ the graded antisymmetric Schouten-Nijenhuis bracket, 
$x,y\in E(A,\mathfrak{g})$ homogeneous tensors and $f\in E(A,\mathfrak{g}^\vee)$
a cotensor. Then
\begin{equation}\label{multi_rules}
\begin{array}{lcl}
dL_x f &=&(-1)^{|x|-1}L_x df \\
i_{[x,y]_S} f &=&(-1)^{(|x|-1)|y|}L_x i_y\, f - i_y L_x f\\
L_{[x,y]_S} f &=& (-1)^{(|x|-1)(|y|-1)}L_x L_x f - L_y L_x f\\
L_{x\wedge y} f &=& (-1)^{|y|} i_y L_x f + L_y i_x f 
\end{array}
\end{equation}
\end{proposition}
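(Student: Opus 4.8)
The plan is to exploit the non-degeneracy of the natural pairing on a torsionless pair: since $\langle i_xf,z\rangle=\langle f,x\smwedge z\rangle$ characterises the contraction, any identity between operators $E(A,\mathfrak{g}^\vee)\to E(A,\mathfrak{g}^\vee)$ can be tested against an arbitrary homogeneous tensor $z\in E(A,\mathfrak{g})$. The one structural fact I would extract first is the auxiliary identity $i_{x\smwedge y}=i_y\circ i_x$: pairing the left side against $z$ gives $\langle f,x\smwedge y\smwedge z\rangle$ by definition, and associativity of $\smwedge$ together with two more applications of the defining equation turns this into $\langle i_yi_xf,z\rangle$. All four formulas of \eqref{multi_rules} are then either immediate or reduce to this together with $d^2=0$ and the graded Leibniz rule for $[\cdot,\cdot]_S$.

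The first identity is immediate from $d^2=0$ alone: applying $d$ to $L_xf=di_xf-(-1)^{|x|}i_xdf$ kills the first term and gives $dL_xf=-(-1)^{|x|}di_xdf$, while $L_x(df)=di_xdf-(-1)^{|x|}i_xd(df)=di_xdf$, so $dL_xf=(-1)^{|x|-1}L_x(df)$. The fourth identity is purely formal once $i_{x\smwedge y}=i_yi_x$ is available: substituting it into $L_{x\smwedge y}f=di_{x\smwedge y}f-(-1)^{|x|+|y|}i_{x\smwedge y}df$ and expanding the right-hand side of the claim through the definitions of $L_x$ and $L_y$, the two occurrences of $(-1)^{|y|}i_ydi_xf$ cancel and precisely $di_yi_xf-(-1)^{|x|+|y|}i_yi_xdf$ survives.

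The second identity carries the real content. Rewritten as $i_{[x,y]_S}=(-1)^{(|x|-1)|y|}\,[L_x,i_y]$, where $[\cdot,\cdot]$ denotes the graded commutator of operators in the tensor grading ($d$ has degree $-1$, $i_z$ degree $|z|$, $L_z$ degree $|z|-1$), I would prove it by induction on $|x|+|y|$. The base cases with $x,y\in A\oplus\mathfrak{g}$ reduce to the classical Cartan identities for functions and vector fields, which in turn follow from the definitions of $d$ and $L$ on $A$ and $\mathfrak{g}^\vee$ and the Leibniz rule of the Lie Rinehart pair. For the inductive step I would write a tensor of positive degree as a sum of terms $a\,x_1\smwedge\cdots\smwedge x_n$ with $a\in A$, $x_i\in\mathfrak{g}$, and combine the graded Leibniz rule $[u\smwedge v,w]_S=u\smwedge[v,w]_S\pm[u,w]_S\smwedge v$ for the Schouten-Nijenhuis bracket with the fourth identity to reduce the contraction along $[x,y]_S$ to contractions along Schouten brackets of strictly lower total degree, after which matching the Koszul signs closes the induction. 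The third identity then follows formally from the first and second: using $L_{[x,y]_S}=[d,i_{[x,y]_S}]$, substituting the second identity, and applying the graded Jacobi identity for the commutator of operators together with $[d,L_x]=0$ (which is exactly the first identity) and $[d,i_y]=L_y$ gives $L_{[x,y]_S}=(-1)^{(|x|-1)(|y|-1)}[L_x,L_y]$, i.e. the stated formula with the obvious correction $L_xL_y$ in the first summand.

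The main obstacle is bookkeeping rather than ideas: one must track the Koszul signs coming from the tensor grading consistently through the induction and the derived-bracket manipulation, and one must respect that $d$, $i_x$ and $L_x$ are not $A$-linear, so that the reduction step genuinely needs the various Leibniz rules rather than formal $A$-multilinearity. The single place where external input is used is the graded Leibniz rule for the Schouten-Nijenhuis bracket (and its compatibility with $\smwedge$), which is standard and already underlies the higher bracket construction of \cite{MR}.
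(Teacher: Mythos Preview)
Your proposal is correct as a sketch, and in fact goes well beyond what the paper does: the paper's own ``proof'' consists of a single sentence referring the reader to \cite{FPR} for the case of multivector fields and differential forms and asserting that the argument carries over verbatim to torsionless Lie Rinehart pairs. You have supplied precisely that carried-over argument.

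Your organisation---deriving the first identity from $d^2=0$, the fourth from $i_{x\smwedge y}=i_yi_x$, the second by induction on tensor degree using the graded Leibniz rule for $[\cdot,\cdot]_S$ together with the fourth identity, and the third as a formal consequence via the derived-bracket manipulation $L_{[x,y]_S}=[d,i_{[x,y]_S}]$---is the standard route and matches what one finds in \cite{FPR}. Two remarks. First, your observation that the third identity as printed must have $L_xL_y$ rather than $L_xL_x$ in the first summand is correct; this is a typo in the paper. Second, for the inductive step you rely on writing an arbitrary homogeneous tensor as an $A$-linear combination of decomposables $x_1\smwedge\cdots\smwedge x_n$: this is legitimate because $E^n(A,\mathfrak{g})$ is by definition a quotient of $\otimes^n_A\mathfrak{g}$ and hence $A$-generated by such products, and you correctly flag that the reduction then proceeds via Leibniz rules rather than $A$-linearity of the operators. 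The sign bookkeeping you warn about is indeed the only place where care is needed; nothing conceptual is missing.
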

\begin{proof} A proof is given in \cite{FPR} in the context of multivector fields
and differential forms, but can be carried over verbatim into our general setting.
\end{proof}
\section{Higher Symplectic Geometry}
\subsection{N-plectic structures}
We define higher symplectic structures as 
torsionless Lie Rinehart pairs together with a 
distinguished cocycle from their Chevalley-Eilenberg complex.
We look at the 'infinitesimal invariants' of such a cocycle
and show that they are a Lie $\I$-algebra with respect to the
higher tensor brackets. 

\begin{definition}Let $(A,\mathfrak{g})$ be a torsionless 
(semi-reflexive) Lie Rinehart pair and 
$\omega\in E_{-(n+1)}(A,\mathfrak{g}^\vee)$ a cocycle of tensor degree $-(n+1)$
for some $n\in\N$. Then
$(A,\mathfrak{g},\omega)$ is called an \textbf{n-symplectic} 
(or just n-plectic) \textbf{structure} and $\omega$ is called its 
\textbf{n-plectic cocycle}.
\end{definition}
We do not distinguish between $n$-plectic cocycles that are degenerate on
vectors and those that are not. The fundamental pairing between 
functions and vector fields generalizes to 
tensors and cotensors in a range of degrees and almost all 
$n$-plectic cocycles have a non trivial kernel on higher tensors, 
whether they are degenerate on vectors or not.
Unique pairings are an exception in general $n$-plectic structures.
\begin{example}Any symplectic manifold $(M,\omega)$ is in particular a 
$1$-plectic structure $(C^\I(M),\mathfrak{X}(M),\omega)$ on the
Lie Rinehart pair of smooth functions and vector fields.
\end{example}
\begin{example}[Trivial $n$-plectic structure]Let 
$(A,\mathfrak{g})$ be a torsionless Lie Rinehart pair and 
$\omega\in E_{-(n+1)}(A,\mathfrak{g}^\vee)$ the zero cotensor. 
Then $(A,\mathfrak{g},\omega)$ is called the 
\textbf{trivial} $n$-plectic structure.
\end{example}
If the Lie derivation of a symplectic form along a vector vanishes, 
the vector is called symplectic. In a general $n$-plectic setting however, 
there is no reason to restrict to vectors, but to ask for 
\textit{all} infinitesimal invariants of an $n$-plectic cocycle:  
\begin{definition}[Symplectic and Hamiltonian tensors]
Let $(A,\mathfrak{g},\omega)$ be an $n$-plectic structure and
$x\in E(A,\mathfrak{g})$ an exterior tensor (of arbitrary tensor degree).
We say that $x$ is a \textbf{symplectic tensor} if the contraction of 
$\omega$ along $x$ is again a cocycle, that is if 
\begin{equation}\label{symplectic_1}
di_x\omega=0\;.
\end{equation}
In addition we say that $x$ is a \textbf{Hamiltonian tensor}, if
the contraction of $\omega$ along $x$ is a coboundary, i.e. if 
there is an exterior cotensor $f\in E(A,\mathfrak{g}^\vee)$ such that
\begin{equation}\label{symplectic_tensor_2}
df=i_x\omega\;.
\end{equation}
\end{definition}
We write $S\overline{ym}(A,\mathfrak{g},\omega)$ for the set of all symplectic tensors. Since
$d\omega=0$, an exterior tensor $x$ is symplectic, if and only if $L_x\omega=0$.
\begin{remark}
Any Hamiltonian tensor is clearly a symplectic tensor, but the converse
depends on the de-Rham cohomology of the Chevalley-Eilenberg complex
$E(A,\mathfrak{g}^\vee)$. However in sharp contrast to symplectic geometry, the
first cohomology group isn't sufficient anymore, since $i_x\omega$ now can have
a tensor degree other then $-1$.
\end{remark}
The following proposition is one of the central observations in this work
and a crucial technical detail. It links 
the Lie $\I$-algebra of exterior tensors to the cochain complex of
exterior cotensors.
\begin{proposition}
Let $(A,\mathfrak{g},\omega)$ be an $n$-plectic structure and
$[\cdot,\ldots,\cdot]_k$ the tensor Lie $k$-bracket on 
$E(A,\mathfrak{g})$. Then
\begin{equation}\label{fundamental_pairing_1}
i_{[x_1,\ldots,x_k]_k}\omega = di_{x_k\wedge\cdots\wedge x_1}\omega
\end{equation}
for any $k\in\N$ with $k\geq 2$ and symplectic tensors 
$x_1,\ldots,x_k\in S\overline{ym}(A,\mathfrak{g},\omega)$.
\end{proposition}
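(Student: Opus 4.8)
The plan is to reduce everything to the Cartan calculus identities in Proposition \eqref{multi_rules} together with the definition of the higher bracket $[\cdot,\ldots,\cdot]_k$. The key structural fact to exploit is that the higher bracket is built, term by term, out of a single Schouten bracket $[x_{s(2)},x_{s(1)}]_S$ wedged against the remaining $x_{s(k)}\wedge\cdots\wedge x_{s(3)}$, so it suffices to understand how $i_{x\wedge y\wedge z}\,\omega$ and $i_{[x,y]_S\wedge z}\,\omega$ behave under $d$ when $x,y,z,\ldots$ are symplectic, i.e. satisfy $L_x\omega = 0$, and then sum over the shuffles with the Koszul signs. First I would record the two basic consequences of $L_x\omega = 0$ that will do all the work: from the fourth identity in \eqref{multi_rules}, $L_{x\wedge y}\omega = (-1)^{|y|} i_y L_x\omega + L_y i_x\omega = L_y i_x\omega$, and iterating, $L_{x_k\wedge\cdots\wedge x_1}\omega$ collapses to a nested string of contractions applied to a single $i_{x_j}\omega$; and from the second identity, $i_{[x,y]_S}\omega = (-1)^{(|x|-1)|y|} L_x i_y\omega - i_y L_x\omega = (-1)^{(|x|-1)|y|} L_x i_y\omega$ whenever $x$ is symplectic.

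Next I would prove the identity by induction on $k$, with $k=2$ as the base case. For $k=2$ the claim reads $i_{[x_1,x_2]_2}\omega = d\,i_{x_2\wedge x_1}\omega$. Unwinding the definition, $[x_1,x_2]_2 = e(x_1)\,[x_2,x_1]_S$ up to the one shuffle sign, so the left side is (a sign times) $i_{[x_2,x_1]_S}\omega = (-1)^{(|x_2|-1)|x_1|} L_{x_2} i_{x_1}\omega$ by the computation above. On the other hand $d\,i_{x_2\wedge x_1}\omega = L_{x_2\wedge x_1}\omega + (-1)^{|x_2\wedge x_1|} i_{x_2\wedge x_1} d\omega$ by Cartan's formula \eqref{Cartans_formula}, and the second term vanishes since $d\omega = 0$, leaving $L_{x_2\wedge x_1}\omega = L_{x_1} i_{x_2}\omega$ (again by the fourth identity, using $L_{x_2}\omega=0$). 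So the base case is a matter of checking that the Koszul/shuffle signs $e(s;x_1,x_2)$, $e(x_{s(1)})$, and the sign $(-1)^{(|x_2|-1)|x_1|}$ coming from swapping $L$ past $i$ all conspire correctly — this is the kind of bookkeeping that \cite{MR} and \cite{FPR} already set up, so I would cite the sign conventions there rather than redo them.

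For the inductive step I would contract $\omega$ along $[x_1,\ldots,x_k]_k$, expand the outer sum over $Sh(2,k-2)$, and in each term apply $i_{[x_{s(2)},x_{s(1)}]_S\,\wedge\,(\text{rest})}\omega$. Using $L_{a\wedge b}\omega$-type identities to peel the wedge factors off one at a time, each term becomes a string of contractions $i_{x_{s(k)}}\cdots i_{x_{s(3)}}$ applied to $i_{[x_{s(2)},x_{s(1)}]_S}\omega = \pm\, L_{x_{s(2)}} i_{x_{s(1)}}\omega$, which — since all the $x_i$ are symplectic — can be rewritten, via the same identities run backwards, as $d$ applied to $i_{x_{s(k)}\wedge\cdots\wedge x_{s(1)}}\omega$ plus correction terms that are themselves lower brackets; the corrections reassemble (by the inductive hypothesis) into exact terms, and the shuffle sum of the leading pieces collapses to the single term $d\, i_{x_k\wedge\cdots\wedge x_1}\omega$. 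I expect the main obstacle to be precisely this last combinatorial collapse: showing that after commuting $d$ through the contractions (which generates Schouten brackets among the $x_i$, i.e. exactly the terms of $[\cdot,\ldots,\cdot]_{k-1}$ and $[\cdot,\ldots,\cdot]_k$), the signs $e(s;\ldots)e(x_{s(1)})$ together with the $(-1)^{|x_i|}$ factors from Cartan's formula and the $(-1)^{(|x_i|-1)|x_j|}$ factors from the $[\,\cdot\,,\,\cdot\,]_S$ identity cancel all but the wanted term. An alternative, possibly cleaner route that I would try if the direct induction gets unwieldy: observe that $x\mapsto i_x\omega$ is, up to sign and degree shift, the chain map underlying the Lie $\I$-morphism $i_\I$ of the previous section, so \eqref{fundamental_pairing_1} is just the defining equation of that morphism restricted to symplectic tensors and one could quote the Lie $\I$-algebra structure of $(E(A,\mathfrak{g}),[\cdot,\ldots,\cdot]_k)$ from \cite{MR} directly; but since the paper seems to want \eqref{fundamental_pairing_1} as the technical input for that identification rather than a consequence of it, I would keep the hands-on Cartan-calculus proof as the primary argument.
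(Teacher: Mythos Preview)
Your overall strategy --- induction on $k$, with the base case handled by the $i_{[x,y]_S}$ identity and Cartan's formula, and the inductive step driven by expanding the definition of $[\cdot,\ldots,\cdot]_{k+1}$ and peeling off contractions --- is exactly the route the paper takes. The base case is fine.

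The gap is in how you expect the inductive step to close. You write that after commuting $d$ through the contractions, the correction terms ``are themselves lower brackets'' and ``reassemble (by the inductive hypothesis) into exact terms.'' That is not what actually happens. When you expand $i_{[x_1,\ldots,x_{k+1}]_{k+1}}\omega$ and apply Cartan's formula, the non-leading terms are of the form $i_{x_{s(1)}}\,d\,i_{x_{s(k+1)}\wedge\cdots\wedge x_{s(2)}}\omega$ and $i_{x_{s(2)}\wedge x_{s(1)}}\,d\,i_{x_{s(k+1)}\wedge\cdots\wedge x_{s(3)}}\omega$. The inductive hypothesis does apply to the inner $d\,i_{\cdots}\omega$, converting it to $i_{[\ldots]_k}\omega$ or $i_{[\ldots]_{k-1}}\omega$, but these are then \emph{further contracted} against the remaining $x_{s(1)}$ or $x_{s(2)}\wedge x_{s(1)}$. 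When you re-expand those lower brackets by their definition and sum over all permutations, the resulting expressions are not exact: they collapse back to numerical multiples of $i_{[x_1,\ldots,x_{k+1}]_{k+1}}\omega$ itself --- the very quantity you are computing. So the induction does not close by cancellation; it closes by producing a linear equation
\[
i_{[x_1,\ldots,x_{k+1}]_{k+1}}\omega \;=\; \tfrac{(k+1)k}{2}\,d\,i_{x_{k+1}\wedge\cdots\wedge x_1}\omega \;-\; k(k-1)\,i_{[\ldots]_{k+1}}\omega \;+\; \tfrac{(k-1)(k-2)}{2}\,i_{[\ldots]_{k+1}}\omega,
\]
which you then solve for $i_{[x_1,\ldots,x_{k+1}]_{k+1}}\omega$ (the coefficients on both sides turn out to be $\tfrac{k(k+1)}{2}$). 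Your proposal does flag the combinatorial collapse as the likely obstacle, but it misreads its nature: it is not a sign-cancellation among exact terms, but a self-referential identity with numerical weights that must be tracked explicitly. Without this observation the induction would appear circular.
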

\begin{proof}We proof this by induction on $k$. For $k=2$ it is the
well known link between the Poisson bracket of functions and the 
Lie bracket of symplectic vector field, but generalized to symplectic tensors of 
arbitrary degree:
\begin{align*}
i_{[x_1,x_2]_2}\omega
&=e(x_1)i_{[x_2,x_1]_S}\omega
 =e(x_1,x_2)L_{x_2}i_{x_1}\omega\\
&=e(x_1,x_2)di_{x_2}i_{x_1}\omega
 =di_{x_2\wedge x_1}\;.
\end{align*}
For the induction step we assume 
$i_{[x_1,\ldots,x_k]_k}\omega=di_{x_k\wedge\cdots\wedge x_1}\omega$ for
some $k\geq 2$ and use
the definition of the higher tensor brackets (\ref{Higher_Lie_bracket}) 
as well as their graded symmetry, to 
transform the left side of equation (\ref{fundamental_pairing_1}) according to:
\begin{multline*}
i_{[x_{1},\ldots,x_{k+1}]_{k+1}}\omega=\\	
\textstyle\sum_{s\in Sh(2,k-1)}e(s;x_{1},\ldots,x_{k+1})e(x_{s(1)})
 i_{x_{s(k+1)}\wedge\cdots\wedge x_{s(3)}\wedge[x_{s(2)},x_{s(1)}]_S}\omega=\\	
% \textstyle\frac{1}{2(k-1)!}\sum_{s\in S_{k+1}}e(s;x_{1},\ldots,x_{k+1})
% e(x_{s(1)})i_{x_{s(k+1)}\wedge\cdots\wedge x_{s(3)}\wedge[x_{s(2)},x_{s(1)}]}\omega=\\
\textstyle\frac{1}{2(k-1)!}\sum_{s\in S_{k+1}}e(s;x_{1},\ldots,x_{k+1})e(x_{s(1)})
 i_{[x_{s(2)},x_{s(1)}]_S}i_{x_{s(k+1)}\wedge\cdots\wedge x_{s(3)}}\omega\;.
\end{multline*}
Then we apply (\ref{multi_rules}) to the right side of the last expression. 
After simplification and reindexing this leads to
\begin{multline*}
% \textstyle\frac{1}{2(k-1)!}
%  \sum_{s\in S_{k+1}}e(s;x_{1},\ldots,x_{k+1})e(x_{s(2)},x_{s(1)})
%   L_{x_{s(2)}}i_{x_{s(1)}}i_{x_{s(k+1)}\wedge\cdots\wedge x_{s(3)}}\omega\\
% -\textstyle\frac{1}{2(k-1)!}\sum_{s\in S_{k+1}}e(s;x_{1},\ldots,x_{k+1})e(x_{s(1)})
%   i_{x_{s(1)}}L_{x_{s(2)}}i_{x_{s(k+1)}\wedge\cdots\wedge x_{s(3)}}\omega=\\	
\textstyle\frac{1}{2(k-1)!}\sum_{s\in S_{k+1}}e(s;x_{1},\ldots,x_{k+1})
 L_{x_{s(1)}}i_{x_{s(k+1)}\wedge\cdots\wedge x_{s(2)}}\omega\\	
 -\textstyle\frac{1}{2(k-1)!}\sum_{s\in S_{k+1}}e(s;x_{1},\ldots,x_{k+1})e(x_{s(1)})
   i_{x_{s(1)}}L_{x_{s(2)}}i_{x_{s(k+1)}\wedge\cdots\wedge x_{s(3)}}\omega\;.
\end{multline*}
Now we rewrite the Lie derivation operator in the previous
expression using Cartans formula (\ref{Cartans_formula}), which gives 
% **************** COMMENTED OUT ****************************
\begin{comment}
\begin{multline*}
\textstyle\frac{1}{2(n-2)!}
 \sum_{s\in S_{n}}e(s;x_{1},\ldots,x_{n})di_{x_{s(1)}}
  i_{x_{s(2)}}i_{x_{s(n)}\wedge\cdots\wedge x_{s(3)}}\omega\\	
-\textstyle\frac{1}{2(n-2)!}\sum_{s\in S_{n}}e(s;x_{1},\ldots,x_{n})e(x_{s(1)})
 i_{x_{s(1)}}di_{x_{s(2)}}i_{x_{s(n)}\wedge\cdots\wedge x_{s(3)}}\omega\\
-\textstyle\frac{1}{2(n-2)!}\sum_{s\in S_{n}}e(s;x_{1},\ldots,x_{n})e(x_{s(1)})
 i_{x_{s(1)}}di_{x_{s(2)}}i_{x_{s(n)}\wedge\cdots\wedge x_{s(3)}}\omega\\
+\textstyle\frac{1}{2(n-2)!}\sum_{s\in S_{n}}e(s;x_{1},\ldots,x_{n})
 e(x_{s(1)})e(x_{s(2)})
  i_{x_{s(1)}}i_{x_{s(2)}}di_{x_{s(n)}\wedge\cdots\wedge x_{s(3)}}\omega\\
\end{multline*}
\end{comment}
% ******************** COMMENTED OUT ************************
\begin{multline*}
\textstyle\frac{1}{2(k-1)!}\sum_{s\in S_{k+1}}e(s;x_{1},\ldots,x_{k+1})
 di_{x_{s(k+1)}\wedge\cdots\wedge x_{s(1)}}\omega\\
-\textstyle\frac{1}{(k-1)!}\sum_{s\in S_{k+1}}e(s;x_{1},\ldots,x_{k+1})e(x_{s(1)})
 i_{x_{s(1)}}di_{x_{s(k+1)}\wedge\cdots\wedge x_{s(2)}}\omega\\
+\textstyle\frac{1}{2(k-1)!}\sum_{s\in S_{k+1}}e(s;x_{1},\ldots,x_{k+1})
 e(x_{s(1)})e(x_{s(2)})i_{x_{s(2)}\wedge x_{s(1)}}
  di_{x_{s(k+1)}\wedge\cdots\wedge x_{s(3)}}\omega\;.
\end{multline*}
Since any $x_j$ is symplectic, we have $dx_j\omega=0$ and it follows, that 
the last sum in this expression vanishes for $k=2$. Using the graded 
symmetry of the exterior tensor product, we rewrite the first sum of the previous
expression according to
\begin{multline*}
\textstyle\frac{(k+1)!}{2(k-1)!}di_{x_{k+1}\wedge\cdots\wedge x_{1}}\omega\\	
-\textstyle\frac{1}{(k-1)!}\sum_{s\in S_{k+1}}e(s;x_{1},\ldots,x_{k+1})e(x_{s(1)})
 i_{x_{s(1)}}di_{x_{s(k+1)}\wedge\cdots\wedge x_{s(2)}}\omega\\
  +\textstyle\frac{1}{2(k-1)!}\sum_{s\in S_{k+1}}e(s;x_{1},\ldots,x_{k+1})
   e(x_{s(1)})e(x_{s(2)})
    i_{x_{s(2)}\wedge x_{s(1)}}di_{x_{s(k+1)}\wedge\cdots\wedge x_{s(3)}}\omega
\end{multline*}
and apply the induction hypothesis on this expression. Recall that if $k=2$, the
last sum vanishes, so this is well defined: 
\begin{multline*}
\textstyle\frac{k(k+1)}{2}di_{x_{k+1}\wedge\cdots\wedge x_{1}}\omega\\	
-\textstyle\frac{1}{(k-1)!}\sum_{s\in S_{k+1}}e(s;x_{1},\ldots,x_{k+1})e(x_{s(1)})
  i_{x_{s(1)}}i_{[x_{s(2)},\ldots,x_{s(k+1)}]_{k}}\omega\\	
+\textstyle\frac{1}{2(k-1)!}\sum_{s\in S_{k+1}}
 e(s;x_{1},\ldots,x_{k+1})e(x_{s(1)})e(x_{s(2)})
 i_{x_{s(2)}\wedge x_{s(1)}}i_{[x_{s(3)},\ldots,x_{s(k+1)}]_{k-1}}\omega
\end{multline*}
In the next step, we substitute the higher tensor brackets again by 
their definition (\ref{Higher_Lie_bracket}). 
(According to a better readable text we write $Sh_S(p,q)$ for 
the set of $(p,q)$-shuffle permutations defined explicit on the finite
set $S$.) This transforms the previous expression into
\begin{multline*}
\textstyle\frac{k(k+1)}{2}di_{x_{k+1}\wedge\cdots\wedge x_{1}}\omega\\
-\textstyle\frac{1}{(k-1)!}\sum_{s\in S_{k+1}}
 \sum_{t\in Sh_{\{s(2),\ldots,s(k+1)\}}(2,k-2)}
  e(s;x_{1},\ldots,x_{k+1})\,\cdot\\
   \cdot e(t;x_{s(2)},\ldots,x_{s(k+1)})
   e(x_{s(1)})e(x_{ts(2)})i_{x_{s(1)}}
    i_{x_{ts(k+1)}\wedge\cdots\wedge x_{ts(4)}\wedge[x_{ts(3)},x_{ts(2)}]_S}\omega
\end{multline*}
\begin{multline*}	
+\textstyle\frac{1}{2(k-1)!}\sum_{s\in S_{k+1}}
 \sum_{t\in Sh_{\{s(3),\ldots,s(k+1)\}}(2,k-3)}
  e(s;x_{1},\ldots,x_{k+1})\,\cdot\\
  \cdot e(t;x_{s(3)},\ldots,x_{s(k+1)})
   e(x_{s(1)})e(x_{s(2)})e(x_{ts(3)})\,\cdot\\
   \cdot i_{x_{s(2)}\wedge x_{s(1)}}
     i_{x_{ts(k+1)}\wedge\cdots\wedge x_{ts(5)}\wedge[x_{ts(4)},x_{ts(3)}]_S}\omega\;.
\end{multline*}
Now observe, that for any $s\in S_{k+1}$  
and shuffle $t\in Sh_{\{s(2),\ldots,s(k+1)\}}(2,k-2)$, 
the permutation $(s(1),ts(2),\ldots,ts(k+1))$ is again an element 
of $S_{k+1}$ and since there are precisely $\frac{k!}{2(k-2)!}$ many
shuffles in $Sh_{\{s(2),\ldots,s(k+1)\}}(2,k-2)$ we can just 'absorb' the first sum
over shuffles in the previous expression into the sum over general permutation. 
A similar argument holds for the second sum. After simplification this gives:
\begin{multline*}
\textstyle\frac{(k+1)k}{2}di_{x_{k+1}\wedge\cdots\wedge x_{1}}\omega\\	
-\textstyle\frac{1}{(k-1)!}\frac{k!}{2(k-2)!}
 \sum_{s\in S_{k+1}}e(s;x_{1},\ldots,x_{k+1})
  e(x_{s(1)})i_{x_{s(k+1)}\wedge\cdots\wedge x_{s(3)}\wedge[x_{s(2)},x_{s(1)}]_S}\omega\\	
+\textstyle\frac{1}{2(k-1)!}\frac{(k-1)!}{2(k-3)!}\sum_{s\in S_{k+1}}
 e(s;x_{1},\ldots,x_{k+1})e(x_{s(1)})
  i_{x_{s(k+1)}\wedge\cdots\wedge x_{s(3)}\wedge[x_{s(2)},x_{s(1)}]_S}\omega
\end{multline*}
Now we can transform this sum over arbitrary permutations back 
into a sum over shuffle permutations, such that we can
apply the definition of the Lie $(n+1)$-bracket another time.
\begin{multline*}	
\textstyle\frac{(k+1)k}{2}di_{x_{k+1}\wedge\cdots\wedge x_{1}}\omega\\	
-\textstyle\frac{k!}{(k-2)!}
 \sum_{s\in Sh(2,k-1)}e(s;x_{1},\ldots,x_{k+1})
  e(x_{s(1)})i_{x_{s(k+1)}\wedge\cdots\wedge x_{s(3)}\wedge[x_{s(2)},x_{s(1)}]_S}\omega\\	
+\textstyle\frac{(k-1)!}{2(k-3)!}
 \sum_{s\in Sh(2,k-1)}e(s;x_{1},\ldots,x_{k+1})e(x_{s(1)})
  i_{x_{s(k+1)}\wedge\cdots\wedge x_{s(3)}\wedge[x_{s(2)},x_{s(1)}]_S}\omega=
 \end{multline*}
$$
\textstyle\frac{(k+1)k}{2}di_{x_{k+1}\wedge\cdots\wedge x_{1}}\omega	
 -\frac{k!}{(k-2)!}i_{[x_1,\ldots,x_{k+1}]_{k+1}}\omega
  +\frac{(k-1)!}{2(k-3)!}i_{[x_1,\ldots,x_{k+1}]_{k+1}}\omega
$$
Again recall, that the last sum is omitted for $k=2$.
Summarizing this long computation we have shown that under the 
induction hypothesis, the equation 
\begin{multline*}
i_{[x_1,\ldots,x_{k+1}]_{k+1}}\omega=\\
\textstyle\frac{(k+1)k}{2}di_{x_{k+1}\wedge\cdots\wedge x_{1}}\omega	
 -k(k-1)i_{[x_1,\ldots,x_{k+1}]_{k+1}}\omega
  +\frac{(k-1)(k-2)}{2}i_{[x_1,\ldots,x_{k+1}]_{k+1}}\omega
\end{multline*}
is satisfied for any $k\geq 2$. This in turn leads to the equation
$$\textstyle
\left(1-\frac{(k-1)(k-2)}{2}+k(k-1)\right)i_{[x_1,\ldots,x_{k+1}]_{k+1}}\omega=
\frac{(k+1)k}{2}di_{x_{k+1}\wedge\cdots\wedge x_1}\omega	
$$
which completes the proof on homogeneous symplectic tensors and hence on arbitrary 
symplectic tensors.
\end{proof}
An immediate consequence of equation (\ref{fundamental_pairing_1}) is, 
that the set of symplectic tensors is closed under the operation of all 
higher tensor brackets: 
\begin{corollary}$S\overline{ym}(A,\mathfrak{g},\omega)$ is a sub Lie $\I$-algebra of 
$\left(E(A,\mathfrak{g}),[\cdot,\ldots,\cdot]_{k\in\N}\right)$.
\end{corollary}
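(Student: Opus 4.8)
\emph{Proof proposal.} The plan is to read off the statement from equation~(\ref{fundamental_pairing_1}) together with $d^2=0$. Recall that calling $S\overline{ym}(A,\mathfrak{g},\omega)$ a sub Lie $\I$-algebra of $\left(E(A,\mathfrak{g}),[\cdot,\ldots,\cdot]_{k\in\N}\right)$ means exactly that it is a graded $\R$-subspace of $E(A,\mathfrak{g})$ which is stable under every higher bracket $[\cdot,\ldots,\cdot]_k$; the defining $L_\I$-relations then hold on the subspace for free, since they are universally quantified multilinear identities in the operators $[\cdot,\ldots,\cdot]_k$ and already hold on the ambient exterior tensor algebra. Note that one only claims an $\R$-subspace, not an $A$-submodule: contracting $\omega$ along $a\cdot_A x$ produces $a\,i_x\omega$, whose exterior derivative picks up a term involving $da\smwedge i_x\omega$ that has no reason to vanish.

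First I would verify that $S\overline{ym}(A,\mathfrak{g},\omega)$ is a graded $\R$-subspace. Writing a general tensor as $x=\sum_p x^{(p)}$ with $x^{(p)}\in E^p(A,\mathfrak{g})$ homogeneous, the contraction decomposes as $i_x\omega=\sum_p i_{x^{(p)}}\omega$ with the $p$-th summand lying in $E_{-(n+1)+p}(A,\mathfrak{g}^\vee)$; since $d$ is homogeneous it sends this summand into a single graded component as well, so $di_x\omega=0$ if and only if $di_{x^{(p)}}\omega=0$ for every $p$. Hence $x$ is symplectic exactly when all its homogeneous components are, and closure under $\R$-linear combinations and the grading are both immediate. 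In particular $0\in S\overline{ym}(A,\mathfrak{g},\omega)$, so the (identically zero) unary bracket restricts trivially.

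For closure under the higher brackets, fix $k\geq 2$ and $x_1,\ldots,x_k\in S\overline{ym}(A,\mathfrak{g},\omega)$; by the previous paragraph it suffices to treat homogeneous $x_i$. Proposition~(\ref{fundamental_pairing_1}) gives
\[
i_{[x_1,\ldots,x_k]_k}\omega=di_{x_k\smwedge\cdots\smwedge x_1}\omega ,
\]
and applying $d$ yields $di_{[x_1,\ldots,x_k]_k}\omega=d^2 i_{x_k\smwedge\cdots\smwedge x_1}\omega=0$, i.e. $[x_1,\ldots,x_k]_k$ is again symplectic. Extending by $\R$-multilinearity in each slot and invoking the first paragraph disposes of non-homogeneous inputs.

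There is essentially no remaining obstacle — all the real work is already encoded in Proposition~(\ref{fundamental_pairing_1}), and the only points that call for a moment's care are the bookkeeping for inhomogeneous tensors (so that ``graded subspace'' is literally correct) and the observation that the set is closed over $\R$ but, in general, not over $A$.
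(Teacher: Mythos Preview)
Your proof is correct and follows essentially the same route as the paper: both arguments verify the graded $\R$-subspace property from linearity, and then establish closure under the higher brackets by applying equation~(\ref{fundamental_pairing_1}) followed by $d^2=0$. The only cosmetic difference is that the paper phrases the symplecticity condition as $L_x\omega=0$ (equivalent to your $di_x\omega=0$ since $d\omega=0$), and your treatment of the graded-subspace step via homogeneous decomposition is slightly more explicit than the paper's one-line appeal to $\R$-linearity.
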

\begin{proof}Since $L_x\omega=0$ is the defining equation of a symplectic tensor and  
Lie derivation is $\R$-linear in both arguments,
$S\overline{ym}(A,\mathfrak{g},\omega)$ is a graded vector subspace of $E(A,\mathfrak{g})$.
To see that it is moreover closed under the operation of 
all brackets, we use (\ref{fundamental_pairing_1}) to compute
$L_{[x_1,\ldots,x_k]_k}\omega=di_{[x_1,\ldots,x_k]_k}\omega =0$ for any
$x_1,\ldots,x_k\in Sym(A,\mathfrak{g},\omega)$.
\end{proof}
\begin{remark}$S\overline{ym}(A,\mathfrak{g},\omega)$ is in general
not a sub $A$-module of $E(A,\mathfrak{g})$. 
In fact since $L_{a\cdot x}\omega=da\smwedge i_x\omega$, 
it is a sub $A$-module, if and only if $da=0$ for any $a\in A$. This is 
well known from symplectic geometry.
\end{remark}
\begin{corollary}The set of Hamiltonian tensors 
is an ideal in the Lie $\I$-algebra $S\overline{ym}(A,\mathfrak{g},\omega)$.  
\end{corollary}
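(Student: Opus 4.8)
The plan is to check the two things required of an ideal in a Lie $\I$-algebra: that the set of Hamiltonian tensors, call it $Ham$, is a graded $\R$-subspace of $S\overline{ym}(A,\mathfrak{g},\omega)$, and that every higher bracket $[x_1,\ldots,x_k]_k$ with $x_1,\ldots,x_k\in S\overline{ym}(A,\mathfrak{g},\omega)$ and at least one entry in $Ham$ again lies in $Ham$.

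For the first point, note that a Hamiltonian tensor is in particular symplectic, since a coboundary $i_x\omega=df$ is automatically a cocycle; hence $Ham\subseteq S\overline{ym}(A,\mathfrak{g},\omega)$, and it remains to see that $Ham$ is closed under sums, $\R$-scalars and the passage to homogeneous components. Closure under sums and scalars is immediate from the $\R$-linearity of the contraction in the tensor slot and of the exterior derivative: if $i_x\omega=df$ and $i_y\omega=dg$ then $i_{\lambda x+y}\omega=d(\lambda f+g)$. To see that $Ham$ is graded I would use that $i_x$ is homogeneous of degree $|x|$ while $d$ is homogeneous of degree $-1$; decomposing a Hamiltonian tensor $x=\sum_k x_k$ and a primitive $f=\sum_j f^{(j)}$ into homogeneous components, the tensor-degree bookkeeping (recall $|\omega|=-(n+1)$) forces $i_{x_k}\omega=d f^{(n-k)}$ for each $k$, so every homogeneous component of a Hamiltonian tensor is itself Hamiltonian. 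Thus $Ham$ is a graded subspace.

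The absorption property then follows at once from equation~(\ref{fundamental_pairing_1}). For $k=1$ the unary bracket is identically zero, which maps $Ham$ into $Ham$ trivially. For $k\geq 2$, if $x_1,\ldots,x_k\in S\overline{ym}(A,\mathfrak{g},\omega)$ — which holds in particular whenever at least one of them is Hamiltonian, since $Ham\subseteq S\overline{ym}(A,\mathfrak{g},\omega)$ — equation~(\ref{fundamental_pairing_1}) gives
$$i_{[x_1,\ldots,x_k]_k}\omega=d\,i_{x_k\wedge\cdots\wedge x_1}\omega,$$
which exhibits $i_{[x_1,\ldots,x_k]_k}\omega$ as a coboundary; hence $[x_1,\ldots,x_k]_k\in Ham$, with $i_{x_k\wedge\cdots\wedge x_1}\omega$ a natural choice of Hamiltonian cotensor. (This argument in fact proves the stronger statement that $[S\overline{ym},\ldots,S\overline{ym}]_k\subseteq Ham$ for every $k\geq 2$.) I do not expect a genuine obstacle here: the entire analytic content has already been absorbed into the preceding proposition, and the only step that deserves a little care is the homogeneity claim for $Ham$, which amounts to matching the tensor degree of $i_{x_k}\omega$ with the correct homogeneous component of the chosen primitive.
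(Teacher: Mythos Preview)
Your proof is correct and uses exactly the same key observation as the paper: equation~(\ref{fundamental_pairing_1}) shows that every higher bracket of symplectic tensors is already Hamiltonian, which is the entire content of the paper's one-line argument. You additionally spell out why $Ham$ is a graded $\R$-subspace, a point the paper leaves implicit; this is a harmless elaboration rather than a different approach.
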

\begin{proof}
This is an immediate consequence of equation (\ref{fundamental_pairing_1}), because
the higher tensor brackets of all symplectic tensors are Hamiltonian.
\end{proof}
Since we consider symplectic tensors of arbitrary degree and potentially
very high degenerated n-plectic cocycles, the kernel of
$\omega$ can be very large. However it is always an ideal in Hamiltonian 
and hence in symplectic tensors, as the following corollary shows: 
\begin{corollary}The kernel $\ker(\omega)$ of the $n$-plectic cocycle
is an ideal in the Lie $\I$-algebra of Hamiltonian tensors.
\end{corollary}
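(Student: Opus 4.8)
The plan is to reduce everything to the fundamental pairing identity (\ref{fundamental_pairing_1}), which has just been established, exploiting that $\ker(\omega)=\{x\in E(A,\mathfrak{g}):i_x\omega=0\}$ sits inside the Hamiltonian tensors, which in turn sit inside the symplectic tensors where that identity is valid. First I would record two preliminary facts: $\ker(\omega)$ is contained in the Hamiltonian tensors, because $i_x\omega=0$ means $i_x\omega=d0$ is a coboundary; and $\ker(\omega)$ is a graded $A$-submodule of $E(A,\mathfrak{g})$, since $x\mapsto i_x\omega$ is $A$-linear in the tensor argument (this follows from $A$-bilinearity of the natural pairing together with its non-degeneracy, via $\langle i_{ax}f,y\rangle=\langle f,(ax)\smwedge y\rangle=a\langle i_xf,y\rangle$). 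In particular $\ker(\omega)$ is a graded $\R$-subspace, which supplies the "linear" half of being an ideal.

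For the bracket condition, fix $k\geq 2$; I want $[x_1,\ldots,x_k]_k\in\ker(\omega)$ whenever at least one argument lies in $\ker(\omega)$ and the remaining ones are Hamiltonian. By the graded symmetry of the higher tensor brackets it suffices to assume $x_1\in\ker(\omega)$ and $x_2,\ldots,x_k$ Hamiltonian (the unary bracket $[\,\cdot\,]_1=0$ being trivial). All the $x_i$ are then symplectic tensors, so (\ref{fundamental_pairing_1}) applies and gives $i_{[x_1,\ldots,x_k]_k}\omega=d\,i_{x_k\smwedge\cdots\smwedge x_1}\omega$. Using the graded commutativity of $\smwedge$ to move $x_1$ to the front, $x_k\smwedge\cdots\smwedge x_1=\pm\,x_1\smwedge x_k\smwedge\cdots\smwedge x_2$, together with the identity $i_{u\smwedge v}=i_v\circ i_u$ (itself a consequence of the defining equation of contraction and non-degeneracy), the right-hand contraction becomes $\pm\,i_{x_k\smwedge\cdots\smwedge x_2}(i_{x_1}\omega)=0$ since $x_1\in\ker(\omega)$. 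Hence $i_{[x_1,\ldots,x_k]_k}\omega=d0=0$, i.e. $[x_1,\ldots,x_k]_k\in\ker(\omega)$. Specializing to all arguments in $\ker(\omega)$ shows in passing that $\ker(\omega)$ is also closed under all brackets, hence a sub Lie $\I$-algebra, so the ideal statement is meaningful.

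I do not anticipate a genuine obstacle: the entire content is carried by (\ref{fundamental_pairing_1}), and the argument is a one-line manipulation of iterated contractions. The only points that need a little care are (i) pinning down the ordering/sign conventions for $i_{u\smwedge v}$ precisely enough to isolate $i_{x_1}\omega$ (the sign is irrelevant once it multiplies zero, but the factorization must be correct), and (ii) stating explicitly what "ideal in a Lie $\I$-algebra" is taken to mean here — namely that every $k$-bracket with one slot in $\ker(\omega)$ and the others in the ambient Hamiltonian algebra lands back in $\ker(\omega)$ — after which graded symmetry collapses the verification to the single case above.
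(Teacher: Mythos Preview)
Your proposal is correct and follows essentially the same route as the paper: both reduce the ideal property to the identity (\ref{fundamental_pairing_1}), observing that $i_{[x_1,\ldots,x_k]_k}\omega=d\,i_{x_k\smwedge\cdots\smwedge x_1}\omega$ vanishes because one factor $x_i$ lies in $\ker(\omega)$. You supply more detail than the paper does (the factorization $i_{u\smwedge v}=i_v\circ i_u$ isolating $i_{x_1}\omega$, and the observation that $\ker(\omega)$ sits inside the Hamiltonian tensors), but the argument is the same one-line application of (\ref{fundamental_pairing_1}).
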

\begin{proof}We have to show $[x_1,\ldots,x_k]_k\in \ker(\omega)$
for all $k\in\N$, if at least one argument $x_i$ is element of the kernel 
of $\omega$, but this follows from (\ref{fundamental_pairing_1}), since
$i_{[x_1,\ldots,x_k]_k}\omega=di_{x_k\wedge\cdots\wedge x_1}\omega=0$, 
for some $x_i\in\ker(\omega)$.
\end{proof}
In what follows we will consider symplectic tensors only up to elements of the kernel 
of $\omega$. Since the kernel is an ideal, we can divide it out and
write
\begin{equation}
Sym(A,\mathfrak{g},\omega):=S\overline{ym}(A,\mathfrak{g},\omega)/\ker(\omega)
\end{equation}
for the Lie $\I$-algebra of symplectic tensors modulo elements of the
kernel of $\omega$. According to a better readable text we just write $x$ instead of 
$[x]$ for an appropriate equivalence class. Contraction and Lie derivative of 
$\omega$ along such a tensor, do not depend on a particular representative.  
\subsection{N-plectic extensions of symplectic tensors}
We show that any choice of an $n$-plectic cocycle 
gives rise to an extension of the Lie $\I$-algebra of symplectic 
tensors by the Chevalley-Eilenberg complex of cotensors.

Usually this cochain complex of cotensors is not seen 
as a Lie $\I$-algebra, but as a differential graded algebra. 
However with respect to the tensor grading we can see it 
as an \textit{abelian} Lie $\I$-algebra, where all brackets except the unary are zero.

To define this extension we need the \textit{shift} $A_{[n]}$ of a
(co)chain complex $A$, which is again a (co)chain complex given by
$A_{[n]}^k = A^{k + n}$ and 
$d_{A_{[n]}}^k := (-1)^{n} d_A^{k+n}$.  
\begin{definition}[The n-plectic extension]\label{n-plectic extension}
Let $(A,\mathfrak{g},\omega)$ be an $n$-plectic structure,
$Sym(A,\mathfrak{g},\omega)$ the Lie $\I$-algebra of symplectic 
tensors up to $\ker(\omega)$ and $E(A,\mathfrak{g}^\vee)_{[n]}$ the 
abelian Lie $\I$-algebra of cotensors, but with the tensor degree shifted by $n$. 
Then the map
\begin{equation}
d_\omega : 
 E(A,\mathfrak{g}^\vee)_{[n]}\oplus Sym(A,\mathfrak{g},\omega) \to  
  E(A,\mathfrak{g}^\vee)_{[n]}\oplus Sym(A,\mathfrak{g},\omega)
\end{equation}
defined by 
$d_\omega\left(f_{[n]},x\right)=
 \left(\left(i_x\omega-df\right){}_{[n]},0\right)$
is called the $\omega$\textbf{-extension of the exterior derivative}. 

Moreover, let $B_{j\in\NN}$ be the sequence of Bell numbers and $k\in\N$ with
$k\geq 2$. Then the map
\begin{equation}
\{\cdot,\ldots,\cdot\}^k_\omega: 
 \textstyle\bigtimes^k \left(E(A,\mathfrak{g}^\vee)_{[n]}
  \oplus Sym(A,\mathfrak{g},\omega)\right)
   \to E(A,\mathfrak{g}^\vee)_{[n]}\oplus Sym(A,\mathfrak{g},\omega)
\end{equation}
defined by
\begin{equation}
\{(f^1_{[n]},x_1),\ldots,(f^k_{[n]},x_k)\}^k_\omega=  
\left({B_{k-1}}\cdot i_{x_k\wedge\cdots \wedge x_1}\omega_{[n]},
 [x_1,\ldots,x_k]_k\right)
\end{equation}
is called the $\omega$\textbf{-extension of the $k$-th tensor bracket}.
\end{definition}
Recall from definition \ref{tensor_grading} that we consider cotensors as graded but
concentrated in non positive degrees. This implies that the
$n$-shifted cotensor $f_{[n]}$ of any homogeneous 
$f\in E(A,\mathfrak{g}^\vee)$ has tensor degree $(n-|f|)$.
  
The following theorem shows, that this gives indeed a 
Lie $\I$-algebra, which can be seen as an extension of the tensor 
Lie $\I$-algebra by the Chevalley-Eilenberg cochain complex along 
the n-plectic cocycle: 
\begin{theorem}\label{extended_brackets}
The graded direct sum 
$E(A,\mathfrak{g}^\vee)_{[n]}\oplus Sym(A,\mathfrak{g},\omega)$ 
of graded vector spaces is a Lie $\I$-algebra with respect to the
n-plectic extensions of the exterior derivative and the higher tensor brackets.
\end{theorem}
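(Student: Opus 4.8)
The plan is to verify the defining identities of an $L_\infty$-algebra for the pair $(d_\omega,\{\cdot,\ldots,\cdot\}^k_\omega)$ directly, exploiting that everything splits along the two summands $E(A,\mathfrak{g}^\vee)_{[n]}$ and $Sym(A,\mathfrak{g},\omega)$, and that the $Sym$-component of every bracket is just the old tensor bracket $[\cdot,\ldots,\cdot]_k$, which is already known to be $L_\infty$ by \cite{MR}. So the projection onto $Sym(A,\mathfrak{g},\omega)$ is a strict $L_\infty$-morphism, and the generalized Jacobi identities for $d_\omega$ and $\{\cdot,\ldots,\cdot\}^k_\omega$, \emph{after} projecting to $Sym$, are automatically satisfied. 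Hence the only thing to check is that the $E(A,\mathfrak{g}^\vee)_{[n]}$-component of each generalized Jacobi relation vanishes. This reduces the whole theorem to a family of scalar/combinatorial identities among the Bell numbers $B_j$, together with one application of the fundamental pairing identity \eqref{fundamental_pairing_1}.

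Concretely, I would first fix the sign/degree bookkeeping: the shift $[n]$ turns the differential $d$ on cotensors into $d_{[n]} = (-1)^n d$, the abelian bracket structure on $E(A,\mathfrak{g}^\vee)_{[n]}$ is unchanged (all higher brackets zero), and $d_\omega$ has the correct degree $-1$ since $i_x\omega$ has tensor degree $|x|-(n+1)$, matching $|f_{[n]}| - 1$ on the nose when $x$ is paired with an $f$ of degree $|f| = |x| - (n+1)$... more carefully, $d_\omega$ sends degree $p$ to degree $p-1$ because both $df$ and $i_x\omega$ land one degree below. Then I would write out the generalized Jacobi identity $J_k = 0$ for the system $l_1 = d_\omega$, $l_k = \{\cdot,\ldots,\cdot\}^k_\omega$ ($k\geq 2$). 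Each term is a composite $l_{k-j+1}\circ(l_j\wedge \mathrm{id})$ (summed over shuffles), and on an input tuple $((f^1_{[n]},x_1),\ldots)$ only three kinds of terms survive in the cotensor component: (i) $d_\omega$ applied last to $\{\ldots\}^m_\omega$, producing $-d(B_{m-1} i_{x_m\wedge\cdots\wedge x_1}\omega)$ plus $i_{[x_{?},\ldots]_m}\omega$-type contributions; (ii) $\{\ldots\}^{k}_\omega$ applied last to one $d_\omega$-slot, which kills that slot's $x_i$ and contributes an $i$-contraction term; (iii) terms where the inner bracket is a higher $\{\ldots\}^m_\omega$ with $m\geq 2$ feeding into $\{\ldots\}^{k-m+1}_\omega$. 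Using \eqref{fundamental_pairing_1} to rewrite $i_{[x_1,\ldots,x_m]_m}\omega = d\,i_{x_m\wedge\cdots\wedge x_1}\omega$, every contraction-against-a-bracket term collapses into a $d(i_{(\cdot)}\omega)$ term, and the whole cotensor component of $J_k$ becomes $d$ applied to a single $E(A,\mathfrak{g}^\vee)$-valued expression whose coefficient is a $\Z$-linear combination of Bell numbers times multinomial shuffle counts.

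The decisive step — and the main obstacle — is showing that this Bell-number combination vanishes identically in $k$. I expect it to reduce to (a signed, shuffle-weighted form of) the Bell recurrence $B_k = \sum_{j=0}^{k-1}\binom{k-1}{j}B_j$; this is exactly why the Bell numbers, and not some other sequence, appear in Definition \ref{n-plectic extension}, and the author has clearly reverse-engineered the coefficients so that the recursion does the job. So I would isolate the coefficient of $d\,i_{x_k\wedge\cdots\wedge x_1}\omega$ (and, by graded symmetry, every other contraction monomial reduces to this one after reindexing), match the shuffle counts $\frac{(k-1)!}{(j-1)!(k-j)!}$ appearing from splitting a $k$-tuple into a $j$-block and a $(k-j)$-block, and check the resulting scalar identity $\sum_j (\pm)\binom{k-1}{j-1}B_{j-1} \cdot(\text{stuff}) = B_{k-1}\cdot(\text{stuff})$ against the Bell recurrence. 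Once the cotensor component is seen to vanish, the $Sym$-component is handled by the cited result of \cite{MR}, and — modulo carefully tracking the Koszul signs through the shift $[n]$ and through the graded symmetry of both $\wedge$ and $[\cdot,\ldots,\cdot]_k$ — the proof is complete.
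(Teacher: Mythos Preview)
Your overall strategy matches the paper's: reduce to the cotensor component (the $Sym$-side being handled by the known $L_\infty$-structure on tensors from \cite{MR}), observe that the $p=1$ terms vanish, isolate the $q=1$ term explicitly, and finish with a scalar identity in Bell numbers coming from the Bell recurrence. Two points need correction, though.

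First, your item (ii) actually contributes nothing, not an ``$i$-contraction term'': since $d_\omega(f^i_{[n]},x_i)$ has zero $Sym$-component and $\{\cdot,\ldots,\cdot\}^k_\omega$ depends on its arguments only through their $Sym$-components, the whole expression is $(0,0)$. This is exactly what the paper uses and is harmless for you, but your bookkeeping is off.

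Second, and this is the real gap, your handling of the type (iii) terms is incorrect. You claim that \eqref{fundamental_pairing_1} turns each
\[
i_{x_{s(k)}\wedge\cdots\wedge x_{s(p+1)}\wedge[x_{s(1)},\ldots,x_{s(p)}]_p}\omega
\]
into a $d(i_{(\cdot)}\omega)$, but \eqref{fundamental_pairing_1} asserts $i_{[x_1,\ldots,x_p]_p}\omega = d\,i_{x_p\wedge\cdots\wedge x_1}\omega$ only; it says nothing about $i_{y\wedge[x_1,\ldots,x_p]_p}\omega$, and the extra wedge factor $y=x_{s(k)}\wedge\cdots\wedge x_{s(p+1)}$ blocks a direct application. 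The paper's way around this is to substitute the \emph{definition} of $[\cdot,\ldots,\cdot]_p$ as a shuffle sum of Schouten brackets, absorb those inner shuffles into the ambient sum over $S_k$, recognize the full tensor $k$-bracket $[x_1,\ldots,x_k]_k$ reappearing with a combinatorial prefactor, and only \emph{then} apply \eqref{fundamental_pairing_1} at level $k$. This expand--recombine step is the heart of the computation; once done, the surviving scalar identity is
\[
\textstyle\sum_{q=2}^{k-1}\binom{k-2}{q-1}B_{q-1}=B_{k-1}-B_0,
\]
(note the binomial is $\binom{k-2}{q-1}$, not $\binom{k-1}{j-1}$ as you guessed), which is exactly the Bell recurrence.
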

\begin{proof}Since the contraction and all tensor brackets are graded linear, 
so are their extensions.
To see that they are homogeneous of tensor degree $-1$, 
suppose that $(f^1_{[n]},x_1),\ldots,(f^k_{[n]},x_k)\in 
E(A,\mathfrak{g}^\vee)_{[n]}\oplus Sym(A,\mathfrak{g},\omega)$ are 
homogeneous. Then $|x_i|=|f^i_{[n]}|=n-|f^i|$ and from the homogeneity of 
the higher tensor bracket follows
\begin{align*}
|x_1|+\ldots+|x_k|-1
&=|[x_1,\ldots,x_k]_k|\\
&=n + (-(n+1)+(|x_1|+\ldots+|x_k|))\\
&=n + (|\omega|+(|x_1|+\ldots+|x_k|))\\
&=n+|i_{x_k\wedge\cdots\wedge x_1}\omega|\\
&=|(i_{x_k\wedge\cdots\wedge x_1}\omega)_{[n]}|\;.
\end{align*}
Therefore the $k$-ary extended bracket is homogeneous of tensor degree $-1$. Similar
we compute
$|(i_x\omega-df)_{[n]}|=n+|i_x\omega-df|=n+|i_x\omega|=n+(|x|-(n+1))=|x|-1$ for
$|x|=f_{[n]}$ which means, that $d_\omega$ is homogeneous of degree $-1$, too. 

Graded symmetry follows from the graded symmetry of the higher tensor brackets and
the exterior product.

To proof the first Jacobi equation, we have to show
$d_\omega^2=0$. Therefore recall that $di_x\omega=0$ on symplectic tensors
and compute
\begin{align*}
d_\omega\left(f_{[n]},x)\right)^2
&=d_\omega\left((i_x\omega-df){}_{[n]},0\right)\\
&=\left((-di_x\omega+d^2f)_{[n]},0\right)\\
&=(0,0)\;.
\end{align*}
To calculate the other Jacobi equations, observe
$\{d_\omega(f^1_{[n]},x_1),\ldots,(f^k_{[n]},x_k)\}^k_\omega=(0,0)$, for any
$k\geq 2$, since the extended bracket vanish if at least one argument 
contains the zero symplectic tensor up to elements of $\ker(\omega)$.

Using this we can simplify the second Jacobi equation according to
\begin{multline*}
d_\omega(\{(f^1_{[n]},x_1),(f^2_{[n]},x_2)\}^2_\omega)
 +\{d_{\omega}(f_{[n]}^{1},x_{1}),(f_{[n]}^{2},x_{2})\}^2_\omega\\
  +e(x_1,x_2)\{d_{\omega}(f_{[n]}^{2},x_{2}),(f_{[n]}^{1},x_{1})\}^2_\omega=
   d_\omega(\{(f^1_{[n]},x_1),(f^2_{[n]},x_2)\}^2_\omega)\;.
\end{multline*}
To see that the single term on the right vanishes too, we use equation 
(\ref{fundamental_pairing_1}) and $B_1=1$ to compute
\begin{align*}
d_\omega(\{(f_{[n]}^{1},x_{1}),(f_{[n]}^{2},x_{2})\}^2_\omega)
&=d_\omega\left((B_1\,i_{x_{2}\wedge x_{1}}\omega)_{[n]},[x_{1},x_{2}]_2\right)\\
&=\left((i_{[x_{1},x_{2}]_2}\omega-B_1\,di_{x_{2}\wedge x_{1}}\omega)_{[n]},0\right)\\
&=(0,0)\;.
\end{align*}
It remains to show that the general weak Jacobi equation vanishes for any $k\geq 3$, that is 
\begin{multline*}
\textstyle\sum_{p+q=k+1}\sum_{s\in Sh(p,q-1)}e(s;x_1,\ldots,x_k)\,\cdot\\
\cdot\left\{\left\{(f^1_{[n]},x_1),\ldots,(f^p_{[n]},x_p)\right\}_p,
 (f^{p+1}_{[n]},x_{p+1}),\ldots,(f^k_{[n]},x_k)\right\}_q=(0,0)
\end{multline*} 
is satisfied. To see that, recall that all terms for $p=1$ vanishes, since
the extended brackets are zero, if at least one argument contains the zero
symplectic tensor. In addition the term for $q=1$ becomes 
$$
((i_{[x_1,\ldots,x_k]_k}\omega 
-B_{k-1}di_{x_k\wedge\cdots\wedge x_1}\omega)_{[n]},0)\;.
$$
Using the definition of the n-plectic extended brackets, 
the remaining sum of the weak Jacobi expression can be written as
\begin{multline*}
\textstyle\sum_{p+q=k+1}^{p,q\geq 2}\sum_{s\in Sh(p,q-1)}e(s;x_1,\ldots,x_k)\;\cdot\\
 \cdot\left((B_{q-1}\,i_{x_{s(k)}\wedge\cdots\wedge x_{s(p+1)}\wedge 
  [x_{s(1)},\ldots,x_{s(p)}]_p}\omega)_{[n]},
  [[x_1,\ldots,x_p]_p,x_{p+1},\ldots,x_k]_q\right)
\end{multline*}
and taken the weak Jacobi identity of the higher tensor brackets
into account, this can be expressed as
\begin{multline*}
\textstyle\sum_{p+q=k+1}^{p,q\geq 2}\sum_{s\in Sh(p,q-1)}e(s;x_1,\ldots,x_k)\cdot\\
 \cdot\left((B_{q-1}\,i_{x_{s(k)}\wedge\cdots\wedge x_{s(p+1)}\wedge 
  [x_{s(1)},\ldots,x_{s(p)}]_p}\omega)_{[n]},0\right)
\end{multline*}
It follows, that the Jacobi identity holds in dimension $k$, if and 
only if the equation
\begin{multline*}
B_{k-1}di_{x_k\wedge\cdots\wedge x_1}\omega -i_{[x_1,\ldots,x_k]_k}\omega =\\
\textstyle\sum_{p+q=k+1}^{p,q\geq 2}\sum_{s\in Sh(p,q-1)}e(s;x_1,\ldots,x_k)
 B_{q-1}\,i_{x_{s(k)}\wedge\cdots\wedge x_{s(p+1)}\wedge 
  [x_{s(1)},\ldots,x_{s(p)}]_p}\omega
\end{multline*}
is satisfied.

To simplify this equation we use (\ref{fundamental_pairing_1}), and the graded symmetry 
of the exterior product and the higher tensor brackets, to rewrite it into
\begin{multline*}
B_{k-1}di_{x_k\wedge\cdots\wedge x_1}\omega -di_{x_k\wedge\cdots\wedge x_1}\omega =\\
\textstyle\sum_{p+q=k+1}^{p,q\geq 2}\frac{1}{p!(q-1)!}B_{q-1}\sum_{s\in S_k}
 e(s;x_1,\ldots,x_k)i_{x_{s(k)}\wedge\cdots\wedge x_{s(p+1)}\wedge 
  [x_{s(1)},\ldots,x_{s(p)}]_p}\omega\;.
\end{multline*}
Then we substitute the tensor $p$-bracket by its definition. 
(According to a better readable text we write $Sh_S(p,q)$ for 
the set of $(p,q)$-shuffle permutations defined explicit on the finite
set $S$.) This transforms the equation into
\begin{multline*}
\left(B_{k-1}-1\right)di_{x_k\wedge\cdots\wedge x_1}\omega=\\
\textstyle
\sum_{p+q=k+1}^{p,q\geq 2}\frac{1}{p!(q-1)!}B_{q-1}\sum_{s\in S_{k}}
 \sum_{t\in Sh_{\{s(1),\ldots,s(p)\}}(2,p-2)}e(s;x_{1},\ldots,x_{k})\,\cdot\\
\cdot e(t;x_{s(1)},\ldots,x_{s(p)})
 e(x_{ts(1)})i_{x_{s(k)}\wedge\cdots\wedge x_{s(p+1)}\wedge 
  x_{ts(p)}\wedge\cdots\wedge x_{ts(3)}\wedge[x_{ts(2)},x_{ts(1)}]_S}\omega
\end{multline*} 
Now observe, that for any $s\in S_{k}$  
and shuffle $t\in Sh_{\{s(1),\ldots,s(p)\}}(2,p-2)$, 
the permutation $(ts(1),\ldots,ts(p),s(p+1),\ldots,s(k))$ is again an element 
of $S_{k}$. Since there are precisely $\frac{p!}{2(p-2)!}$ many
shuffles in $Sh_{\{s(1),\ldots,s(p)\}}(2,p-2)$ we can just 'absorb' the sum
over these shuffles in the previous equation into the sum over general permutation:
\begin{multline*}
\left(B_{k-1}-B_0\right)di_{x_k\wedge\cdots\wedge x_1}\omega=\\
\textstyle
\sum_{p+q=k+1}^{p,q\geq 2}\frac{B_{q-1}}{2(q-1)!(p-2)!}\sum_{s\in S_{k}}
 e(s;x_{1},\ldots,x_{k})
 e(x_{s(1)})i_{x_{s(k)}\wedge\cdots\wedge x_{s(3)}\wedge[x_{s(2)},x_{s(1)}]_S}\omega.
\end{multline*} 
Then we transform the summation over arbitrary permutations back into 
a sum over $(2,k-2)$-shuffles, such that we can apply the definition of 
the tensor $k$-bracket again. This gives
\begin{multline*}
\left(B_{k-1}-B_0\right)di_{x_k\wedge\cdots\wedge x_1}\omega=
 \textstyle \sum_{p+q=k+1}^{p,q\geq 2}\frac{(k-2)!}{(q-1)!(p-2)!}B_{q-1}\,\cdot\\
  \cdot\textstyle\sum_{s\in Sh(2,(k-2))}e(s;x_{1},\ldots,x_{k})e(x_{s(1)})
 i_{x_{s(k)}\wedge\cdots\wedge x_{s(3)}\wedge[x_{s(2)},x_{s(1)}]_S}\omega
\end{multline*}
and after using the definition of the $k$-ary tensor bracket as well
as (\ref{fundamental_pairing_1}), this rewrites into 
\begin{align*}
\left(B_{k-1}-B_0\right)di_{x_k\wedge\cdots\wedge x_1}\omega=
&\textstyle\sum_{p+q=k+1}^{p,q\geq 2}\frac{(k-2)!}{(q-1)!(p-2)!}B_{q-1}
 i_{[x_1,\ldots,x_k]_k}\omega=\\
&\textstyle\sum_{p+q=k+1}^{p,q\geq 2}\frac{(k-2)!}{(q-1)!(p-2)!}B_{q-1}
 di_{x_k\wedge\cdots\wedge x_1}\omega\;.
\end{align*}
Summarizing this computation, the weak Jacobi equation in dimension $k$ is satisfied, if 
and only if 
$$
\textstyle\sum_{q=2}^{k-1}\frac{(k-2)!}{(q-1)!(k-1-q)!}B_{q-1}
 =B_{k-1}-B_0
$$
but this equation holds for all $k\geq 3$, as we can see from 
the recurrence relation 
$B_{k+1}=\sum_{p=0}^{k} \binom{k}{p} B_p$ of the Bell numbers.
% ********** COMMENTED OUT *****************
\begin{comment}
\begin{align*}
B_{k-1}-1	
&=\sum_{p=0}^{k-2}\binom{k-2}{p}B_p\\	
&=\sum_{p=1}^{k-2}\binom{k-2}{p}B_p+\binom{k-2}{0}B_0-1\\	
&=\sum_{p=1}^{k-2}\frac{(k-2)!}{(k-2-p)!p!}+1-1	
\end{align*}
Substituting $q:=p+1$, hence $p=q-1$ gives
$B_{k-1}-B_0=\sum_{q=2}^{k-1}\frac{(k-2)!}{(k-2-(q-1))!(q-1)!}$
\end{comment}
% ********** COMMENTED OUT *****************
Therefore all weak Jacobi equations are satisfied and the n-plectic extension
is a Lie $\I$-algebra.
\end{proof}
\begin{corollary} 
Considering the Chevalley-Eilenberg complex of exterior cotensors as a 
Lie $\I$-algebra with only non vanishing unary bracket, the diagram
\begin{equation}
\xymatrix{
E(A,\mathfrak{g}^\vee)_{[n]} \ar@{^{(}->}[r] & 
   E(A,\mathfrak{g}^\vee)_{[n]}\oplus Sym(A,\mathfrak{g},\omega) \ar@{->>}[r] & 
   Sym(A,\mathfrak{g},\omega)
}
\end{equation}
is a short exact sequence of Lie $\I$-algebras, where the morphisms
are the natural inclusion and projection, respectively.
\end{corollary}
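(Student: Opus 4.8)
The plan is to verify the three defining properties of a short exact sequence of Lie $\I$-algebras, namely that the inclusion and projection are strict Lie $\I$-morphisms, that their composite is zero, and that the sequence is exact at each of the three spots as a sequence of graded vector spaces. Since Theorem~\ref{extended_brackets} already establishes that the middle object is a Lie $\I$-algebra with the stated structure maps, and since the Chevalley-Eilenberg complex $E(A,\mathfrak{g}^\vee)_{[n]}$ is by construction an abelian Lie $\I$-algebra whose only nonzero bracket is the unary one $d$ (up to the shift sign), the work reduces to checking compatibility of the inclusion $\iota$ and projection $\pi$ with the brackets $d_\omega$ and $\{\cdot,\ldots,\cdot\}^k_\omega$.

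First I would treat the inclusion $\iota\colon f_{[n]}\mapsto (f_{[n]},0)$. For the unary brackets one computes $d_\omega(f_{[n]},0)=((i_0\omega-df)_{[n]},0)=((-df)_{[n]},0)$, which is exactly $\iota$ applied to the shifted differential $d_{[n]}$ on $E(A,\mathfrak{g}^\vee)_{[n]}$, so $\iota$ intertwines the unary operations. For $k\geq 2$, both sides vanish: the extended $k$-bracket $\{\cdot,\ldots,\cdot\}^k_\omega$ is zero whenever an argument has zero symplectic-tensor component (as noted in the proof of Theorem~\ref{extended_brackets}), and the abelian bracket on $E(A,\mathfrak{g}^\vee)_{[n]}$ is zero for $k\geq 2$ by definition. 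Hence $\iota$ is a strict Lie $\I$-morphism; it is manifestly injective and graded of degree $0$.

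Next I would treat the projection $\pi\colon (f_{[n]},x)\mapsto x$ onto $Sym(A,\mathfrak{g},\omega)$. For $k\geq 2$, $\pi\{(f^1_{[n]},x_1),\ldots,(f^k_{[n]},x_k)\}^k_\omega=[x_1,\ldots,x_k]_k$, which is precisely the $k$-bracket on $Sym(A,\mathfrak{g},\omega)$ applied to $\pi(f^i_{[n]},x_i)=x_i$; and for the unary bracket, $\pi(d_\omega(f_{[n]},x))=\pi((i_x\omega-df)_{[n]},0)=0$, which matches the fact that $Sym(A,\mathfrak{g},\omega)$ carries the trivial differential (its unary bracket is zero, as recorded after the definition of the higher Lie brackets). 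So $\pi$ is a strict Lie $\I$-morphism, evidently surjective and graded of degree $0$. Finally, $\pi\circ\iota=0$ is immediate, and exactness in the middle is the elementary linear-algebra fact that $\ker\pi=\{(f_{[n]},0)\}=\operatorname{im}\iota$, using that the underlying graded vector space is the direct sum $E(A,\mathfrak{g}^\vee)_{[n]}\oplus Sym(A,\mathfrak{g},\omega)$.

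There is no real obstacle here: the entire content is bookkeeping, and the only point that requires a moment's care is the sign in the shifted differential — one must check that $d_\omega$ restricted to the $E(A,\mathfrak{g}^\vee)_{[n]}$ summand reproduces $d_{[n]}=(-1)^n d$ and not $d$ itself. This is arranged by the sign convention $d_{A_{[n]}}^k=(-1)^n d_A^{k+n}$ introduced just before Definition~\ref{n-plectic extension}, so that the formula $d_\omega(f_{[n]},x)=((i_x\omega-df)_{[n]},0)$ is consistent once the shift is applied inside the bracket; after this observation all three morphism conditions fall out immediately.
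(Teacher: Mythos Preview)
Your proposal is correct; the paper states this corollary without proof, treating it as immediate from Theorem~\ref{extended_brackets}, and your verification is exactly the routine check one would supply. You even flag the one genuinely delicate point (the sign in the shifted differential), which the paper itself leaves implicit.
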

\subsection{The Hamiltonian Cohomology}
On a connected symplectic manifold any two
Hamiltonian functions associated to the same
Hamiltonian vector field have equal 
exterior derivatives and hence differ by a constant only.

From another perspective, we could say that 
Hamiltonian functions are only defined up to locally constant functions, 
or that Hamiltonian functions associated to the same vector 
field differ by closed 0-forms only.

We generalize this idea to Hamiltonian tensors of 
possible higher tensor degrees, by passing to the cohomology of 
$E(A,\mathfrak{g}^\vee)_{[n]}\oplus Sym(A,\mathfrak{g},\omega)$
with respect to the coboundary map $d_\omega$.
The cocycle property of $d_\omega$ then provides the fundamental pairing 
between Hamiltonian tensors and cotensors and passing to the cohomology
takes care of the previously mentioned ambiguity inherent in this pairing.

The central part of this work is to show that there is a 
Lie $\I$-structure on this cohomology, wich generalizes
the usual \textit{Poisson Lie bracket} of symplectic functions to the general
higher context.

To start we first look at the cocycles and coboundaries of the n-plectic 
extended derivation $d_\omega$. As the following proposition shows the
cocycle property is then nothing but the fundamental pairing well known 
from symplectic geometry:
\begin{proposition}Let $(A,\mathfrak{g},\omega)$ be an $n$-plectic structure 
with n-plectic extension
$E(A,\mathfrak{g}^\vee)_{[n]}\oplus Sym(A,\mathfrak{g},\omega)$. Then a pair
$(f_{[n]},x)\in E(A,\mathfrak{g}^\vee)_{[n]}\oplus Sym(A,\mathfrak{g},\omega)$
is a cocycle with respect to $d_\omega$, if and only if
\begin{equation}\label{funda_pairing}
i_x\omega=df\;.
\end{equation}
It is moreover a coboundary, if and only if
$x\in\ker(\omega)$ and there is a symplectic tensor 
$y\in Sym(A,\mathfrak{g},\omega)$
as well as a cotensor $h\in E(A,\mathfrak{g}^\vee)$, such that $f=i_y\omega - dh$. Two 
cocycles $(f^1_{[n]},x_1)$, $(f^2_{[n]},x_2)$ 
are cohomologous, precisely if $x_1-x_2\in\ker(\omega)$ and 
$f_1-f_2=i_y\omega +dh$.
\end{proposition}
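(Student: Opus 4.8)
The plan is to unwind the definitions of cocycle and coboundary for the extended complex $\bigl(E(A,\mathfrak{g}^\vee)_{[n]}\oplus Sym(A,\mathfrak{g},\omega),\,d_\omega\bigr)$ and translate each condition into a statement about contractions of $\omega$. First I would treat the cocycle condition: by Definition~\ref{n-plectic extension}, $d_\omega(f_{[n]},x)=\bigl((i_x\omega-df)_{[n]},0\bigr)$, and since the shift $E(A,\mathfrak{g}^\vee)_{[n]}$ is faithful on the underlying module, this vanishes precisely when $i_x\omega-df=0$, i.e.\ when $i_x\omega=df$. This immediately gives \eqref{funda_pairing}. Note in passing that this forces $di_x\omega=d^2f=0$, which is just the symplectic-tensor condition, so the cocycle equation is exactly the Hamiltonian-tensor condition together with a choice of primitive $f$.

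Next I would analyze coboundaries. A pair $(f_{[n]},x)$ is a coboundary iff there exists $(h_{[n]},y)\in E(A,\mathfrak{g}^\vee)_{[n]}\oplus Sym(A,\mathfrak{g},\omega)$ with $d_\omega(h_{[n]},y)=(f_{[n]},x)$. Again using $d_\omega(h_{[n]},y)=\bigl((i_y\omega-dh)_{[n]},0\bigr)$, matching components forces $x=0$ in $Sym(A,\mathfrak{g},\omega)$ --- that is, $x\in\ker(\omega)$, recalling that $Sym$ is defined as $S\overline{ym}$ modulo $\ker(\omega)$ --- and $f=i_y\omega-dh$ for some symplectic tensor $y$ and cotensor $h$. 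Conversely, any such $(f,x)$ is visibly $d_\omega(h_{[n]},y)$, so this is an exact characterization. The "cohomologous" statement is then just the coboundary statement applied to the difference $(f^1_{[n]},x_1)-(f^2_{[n]},x_2)=\bigl((f^1-f^2)_{[n]},x_1-x_2\bigr)$: the two cocycles differ by a coboundary iff $x_1-x_2\in\ker(\omega)$ and $f^1-f^2=i_y\omega-dh$ for some symplectic $y$ and cotensor $h$; I note a harmless sign discrepancy with the displayed $f_1-f_2=i_y\omega+dh$, which is absorbed by replacing $h$ with $-h$.

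There is essentially no hard step here; the statement is a direct computation with the definition of $d_\omega$. The only point requiring a little care is the bookkeeping around the degree shift $[n]$ and the quotient by $\ker(\omega)$: one must observe that the map $f\mapsto f_{[n]}$ is injective (it is just a relabeling of degrees, not a genuine quotient), so that vanishing of $(i_x\omega-df)_{[n]}$ is equivalent to vanishing of $i_x\omega-df$ in $E(A,\mathfrak{g}^\vee)$; and that ``$x=0$ in $Sym(A,\mathfrak{g},\omega)$'' unpacks to ``$x\in\ker(\omega)$'' by the definition of that quotient Lie $\I$-algebra. With those two observations in place the proof is a two-line matching of components for the cocycle part and a two-line matching for the coboundary part, followed by subtracting to get the cohomologous criterion.

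I would organize the write-up as three short paragraphs --- cocycles, coboundaries, cohomologous pairs --- each consisting of expanding $d_\omega$ via Definition~\ref{n-plectic extension} and reading off the componentwise equations, with a single sentence reconciling the sign on $dh$. No induction, no appeal to \eqref{fundamental_pairing_1}, and no Lie $\I$-machinery is needed for this proposition: it is purely about the linear coboundary operator $d_\omega$, and the heavier structural facts (that $Sym$ and $\ker(\omega)$ behave well, that $d_\omega^2=0$) have already been established in Theorem~\ref{extended_brackets} and may be cited freely.
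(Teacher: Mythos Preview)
Your proposal is correct and follows exactly the same approach as the paper: the paper's entire proof is the single sentence ``Apply the definition of $d_\omega$ and recall that $0\in Sym(A,\mathfrak{g},\omega)$ is the kernel of $\omega$,'' which is precisely the componentwise unwinding you describe. Your write-up is simply a more detailed expansion of that one line, and your observation about the harmless sign on $dh$ is accurate.
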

\begin{proof}Apply the definition of $d_\omega$ and recall that 
$0\in Sym(A,\mathfrak{g},\omega)$ is the kernel of $\omega$.
\end{proof}
Now passing to the cohomology, restricts 
the n-plectic extension of symplectic tensors precisely to the
Hamiltonian tensors, together with particular cotensors, connected
by the fundamental pairing, up to closed forms. 
\begin{definition}[Hamiltonian Cohomology]
Let $(A,\mathfrak{g},\omega)$ be an $n$-symplectic structure,
$E(A,\mathfrak{g}^\vee)_{[n]}\oplus Sym(A,\mathfrak{g},\omega)$ the 
n-plectic extension of symplectic tensors and 
$d_\omega$ the n-plectic extension of the exterior derivative.
Then the graded $\R$-vector space 
$$
H(A,\mathfrak{g},\omega):= \boplus_{k\in\Z} 
 \ker(d^k_\omega) / \mathrm{im}(d^{k+1}_\omega)
$$
is called the \textbf{Hamiltonian cohomology} of the $n$-plectic
structure. Cohomology classes 
$[f_{[n]},x]\in H(A,\mathfrak{g},\omega)$ are called 
(pairs of) \textbf{Hamiltonian tensors and cotensors} and the defining
equation of a cocycle
\begin{equation}
i_x\omega= df
\end{equation}
is called the \textbf{fundamental pairing} of Hamiltonian tensors and
cotensors.
\end{definition} 
Elements of a cohomology class 
$[f_{[n]},x]\in H^k(A,\mathfrak{g},\omega)$ are pairs of Hamiltonian tensors,
homogeneous of tensor degree $k$ and cotensors, homogeneous of
tensor degree $(n-k)$, linked by the equation
$i_x\omega = df$. All representative tensors are equal up to elements of the kernel of 
$\omega$ and all such cotensors are equal up to (certain) closed forms.

The next proposition shows, the Hamiltonian cohomology complex is bounded:
\begin{proposition}Let $(A,\mathfrak{g},\omega)$ be an $n$-plectic structure. Then
the Hamiltonian cohomology is an $\NN$-graded complex, where in particular
the $k$-th cohomology $H^k(A,\mathfrak{g},\omega)$ is trivial
for all $k>(n+1)$.
\end{proposition}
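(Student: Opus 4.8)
The plan is to compute $H(A,\mathfrak{g},\omega)$ degree by degree, using the characterisation of the $d_\omega$-cocycles and -coboundaries established just above rather than manipulating $d_\omega$ directly. A homogeneous cochain of degree $k$ is a pair $(f_{[n]},x)$ whose symplectic-tensor component $x$ has tensor degree $k$ and whose shifted cotensor $f_{[n]}$ also has degree $k$; for a cocycle this amounts to $x\in E^k(A,\mathfrak{g})$, $f$ an $(n-k)$-cotensor, and $i_x\omega=df$. Since exterior tensors of negative degree vanish, $x$ is zero unless $k\ge 0$, and by the degree conventions of Definition \ref{n-plectic extension} (cf. the homogeneity count in the proof of Theorem \ref{extended_brackets}) the shifted cotensor part carries non-negative degree as well; hence the complex $E(A,\mathfrak{g}^\vee)_{[n]}\oplus Sym(A,\mathfrak{g},\omega)$, and with it its cohomology, is concentrated in degrees $k\ge 0$. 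This gives the $\NN$-gradedness.

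For the upper bound I would fix $k>n+1$ and take an arbitrary cocycle $(f_{[n]},x)$ of degree $k$. Then $x\in E^k(A,\mathfrak{g})$, so the contraction $i_x\omega$ lies in $E_{k-(n+1)}(A,\mathfrak{g}^\vee)$, a module of \emph{positive} tensor degree $k-(n+1)$. By Definition \ref{tensor_grading} every cotensor of positive tensor degree is zero, so $i_x\omega=0$. Two conclusions follow. First, $x\in\ker(\omega)$, so $x$ represents the zero class in $Sym(A,\mathfrak{g},\omega)=S\overline{ym}(A,\mathfrak{g},\omega)/\ker(\omega)$. Second, the cocycle equation forces $df=i_x\omega=0$, and since the cotensor $f$ underlying a shifted cotensor $f_{[n]}$ of degree $k>n$ would itself have strictly positive tensor degree, $f=0$ as well. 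Hence the degree-$k$ part of the complex is already trivial, so $H^k(A,\mathfrak{g},\omega)=0$ for every $k>n+1$.

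The part that will take the most care is the bookkeeping of tensor degrees through the shift $[n]$: one has to confirm that in a homogeneous cochain of degree $k$ the tensor component really sits in $E^k(A,\mathfrak{g})$ and the cotensor component is an $(n-k)$-cotensor, so that only the band $0\le k\le n+1$ can be non-trivial — with the extreme case $k=n+1$ being a symplectic $(n+1)$-tensor paired with the zero cotensor, where the cocycle condition $i_x\omega=0$ itself collapses the contribution. Everything else is the vanishing observation above together with the already-established description of $d_\omega$-cocycles, so this degree accounting is the main, and essentially the only, obstacle.
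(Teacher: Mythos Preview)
Your argument for the upper bound $k>n+1$ is correct and in fact slightly sharper than the paper's: you observe that for $k>n$ the cotensor summand is already zero (an $(n-k)$-form with $n-k<0$ does not exist), and for $k>n+1$ every $k$-tensor lies in $\ker(\omega)$, so the degree-$k$ piece of the complex vanishes outright. The paper reaches the same conclusion by instead arguing that each cocycle in such a degree is a $d_\omega$-coboundary.

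The lower bound, however, has a genuine gap. Your claim that the shifted cotensor summand $E(A,\mathfrak{g}^\vee)_{[n]}$ is concentrated in non-negative degrees is false. An $m$-cotensor $f\in E_{-m}(A,\mathfrak{g}^\vee)$ has shifted degree $n-m$ (this is the only reading compatible with $d_\omega$ being homogeneous of degree $-1$; the line ``$|f_{[n]}|=n-|f|$'' after Definition~\ref{n-plectic extension} must be parsed with $|f|$ standing for the form degree $m$, not the tensor degree $-m$). Hence cotensors of form degree $m>n$ sit in strictly negative shifted degree, and the complex is \emph{not} zero there. The paper handles $k<0$ differently: the tensor component of any degree-$k$ cochain vanishes, so a cocycle satisfies $df=0$, and the paper then argues that such a pair $(f_{[n]},0)$ is a $d_\omega$-coboundary. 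You would need an argument of this type; asserting that the complex itself vanishes in negative degrees does not work.
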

\begin{proof}To see $H^{-k}(A,\mathfrak{g},\omega)=\{0\}$ for all
$k\in\N$, observe that any representative Hamiltonian tensor $x$ of a 
cohomology class $[f_{[n]},x]\in H^{-k}(A,\mathfrak{g},\omega)$ has tensor
degree $-k$ and is therefore zero. From the cocycle condition 
$i_x\omega = df$ then follows that any representative Hamiltonian
cotensor $f$ has to satisfy $df=0$, which means, that it is a coboundary
with respect to $d_\omega$. Hence the class is the zero class.

To see the other bound, suppose $[f_{[n]},x]\in H^k(A,\mathfrak{g},\omega)$ 
for some $k>(n+1)$. 
Then the tensor degree of a any representative tensor $x$ is
$|x|=k>(n+1)$ and hence $x\in\ker(\omega)$. From the cocycle condition 
$i_x\omega=df$ then follows $df=0$ and again this means, that $f$ is a coboundary
with respect to $d_\omega$ and that the class is the zero class. 
\end{proof}
As the central part of this work, we now generalize the usual Poisson 
Lie bracket of Hamiltonian functions to a sequence of 'higher brackets' on the
Hamiltonian cohomology of any n-plectic structure and show that this
arranges into a Lie $\I$-algebra.
\begin{definition}[Poisson brackets]
Let $(A,\mathfrak{g},\omega)$ be an $n$-plectic
structure, with Hamiltonian cohomology $H(A,\mathfrak{g},\omega)$,
$[\cdot,\ldots,\cdot]_{k\in\N}$ the sequence of higher tensor brackets
(\ref{Higher_Lie_bracket}). Then the map
\begin{equation}
\{\cdot,\ldots,\cdot\}_k: H(A,\mathfrak{g},\omega)\times\cdots\times 
 H(A,\mathfrak{g},\omega) \to  H(A,\mathfrak{g},\omega)
\end{equation}
defined for any $k\in\N$ and cohomology classes 
$[f^1_{[n]},x_1],\ldots,[f^k_{[n]},x_k]\in H(A,\mathfrak{g},\omega)$
by the equation
\begin{equation}
\{[f^1_{[n]},x_1],\ldots,[f^k_{[n]},x_k]\}_k =
  [\;i_{x_k\wedge\cdots\wedge x_1}\omega_{[n]}\,,\,[x_1,\ldots,x_k]_k\;]\,,
\end{equation}
is called the \textbf{Poisson Lie k-bracket} (or just $k$-ary Poisson Lie bracket) 
of the Hamiltonian cohomology.
\end{definition}
As the following theorem show, these brackets are well defined and 
arrange the Hamiltonian cohomology into a Lie $\I$-algebra.
\begin{theorem}The Hamiltonian cohomology $H(A,\mathfrak{g},\omega)$
is a Lie $\I$-algebra, with respect to the sequence 
$\{\cdot,\ldots,\cdot\}_{k\in\N}$ of Poisson Lie k-brackets.
\end{theorem}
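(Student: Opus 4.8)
The plan is to deduce this theorem from Theorem \ref{extended_brackets} by passing to cohomology, exactly as one would descend a Lie $\I$-structure from a chain complex to its homology. First I would recall that the $n$-plectic extension $C := E(A,\mathfrak{g}^\vee)_{[n]}\oplus Sym(A,\mathfrak{g},\omega)$ is a Lie $\I$-algebra with differential $d_\omega$ and higher brackets $\{\cdot,\ldots,\cdot\}^k_\omega$, and that $H(A,\mathfrak{g},\omega) = H(C,d_\omega)$ by definition. The general principle (homotopy transfer / the fact that the unary bracket of a Lie $\I$-algebra is a differential whose homology inherits an induced Lie $\I$-structure) says that $H(C,d_\omega)$ carries brackets induced by $\{\cdot,\ldots,\cdot\}^k_\omega$, provided these brackets descend to cohomology. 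So the real content is: (i) the extended brackets send cocycles to cocycles, (ii) they send a tuple with one coboundary entry to a coboundary, and (iii) the resulting induced brackets agree with the Poisson Lie $k$-brackets $\{\cdot,\ldots,\cdot\}_k$ as defined.

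For step (i), take cocycles $(f^i_{[n]},x_i)$, so by the previous proposition $i_{x_i}\omega = df^i$, in particular each $x_i$ is Hamiltonian, hence symplectic. Then
\[
\{(f^1_{[n]},x_1),\ldots,(f^k_{[n]},x_k)\}^k_\omega
= \bigl(B_{k-1}\, i_{x_k\wedge\cdots\wedge x_1}\omega_{[n]},\,[x_1,\ldots,x_k]_k\bigr),
\]
and applying $d_\omega$ to this gives
\[
\bigl((\,i_{[x_1,\ldots,x_k]_k}\omega - d(B_{k-1} i_{x_k\wedge\cdots\wedge x_1}\omega)\,)_{[n]},\,0\bigr),
\]
whose first component vanishes for $k\geq 2$ by equation (\ref{fundamental_pairing_1}) once one absorbs the constant $B_{k-1}$ (the point being only that $i_{[x_1,\ldots,x_k]_k}\omega$ is exact with the right primitive up to the scalar $B_{k-1}$; the $B_{k-1}$ was precisely chosen in Definition \ref{n-plectic extension} so that the extended structure is a Lie $\I$-algebra, and here it only needs to multiply an exact form). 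For $k=1$ the unary bracket is $d_\omega$ itself, which squares to zero. For step (ii), if some entry, say $(f^j_{[n]},x_j)$, is a coboundary then by the previous proposition $x_j\in\ker(\omega)$; by the Corollary that $\ker(\omega)$ is an ideal in Hamiltonian tensors, $[x_1,\ldots,x_k]_k\in\ker(\omega)$, so the second component of the bracket vanishes, and $i_{x_k\wedge\cdots\wedge x_1}\omega = 0$ as well since $x_j$ is contracted into $\omega$; hence the whole bracket is zero, a fortiori a coboundary. Thus the induced brackets on $H(C,d_\omega)$ are well defined, and reading off the formula in step (i) shows they are exactly $\{[f^1_{[n]},x_1],\ldots,[f^k_{[n]},x_k]\}_k = [\,i_{x_k\wedge\cdots\wedge x_1}\omega_{[n]},\,[x_1,\ldots,x_k]_k\,]$ — the factor $B_{k-1}$ disappears on cohomology since it multiplies a coboundary component, so it does not appear in the Poisson bracket.

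Finally, the Lie $\I$-axioms on $H(A,\mathfrak{g},\omega)$: graded antisymmetry of each $\{\cdot,\ldots,\cdot\}_k$ is inherited directly from that of $\{\cdot,\ldots,\cdot\}^k_\omega$ (Theorem \ref{extended_brackets}), and the generalized Jacobi identities hold because they hold for $\{\cdot,\ldots,\cdot\}^k_\omega$ on $C$: each Jacobi expression for the induced brackets is the class of the corresponding Jacobi expression in $C$, which is exactly $d_\omega$ of something (the $d_\omega$-term in the weak Jacobi identity) plus terms that already vanish, hence maps to zero in cohomology. I would phrase this as: the quotient map $C \to H(C,d_\omega)$ is a (strict) morphism of the induced structures on the level of $\ker d_\omega$, so the axioms transfer. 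The main obstacle — and the only step requiring care rather than bookkeeping — is verifying that the constant $B_{k-1}$ genuinely drops out, i.e. that on cocycles the first component $B_{k-1} i_{x_k\wedge\cdots\wedge x_1}\omega$ differs from the "naive" choice $i_{x_k\wedge\cdots\wedge x_1}\omega$ by a $d_\omega$-coboundary in $C$; this follows because their difference $(B_{k-1}-1) i_{x_k\wedge\cdots\wedge x_1}\omega$ sits in the first slot with zero in the second, and by (\ref{fundamental_pairing_1}) such a pair is $d_\omega$ of $(0, \text{something})$ only after checking that $i_{x_k\wedge\cdots\wedge x_1}\omega$ is itself $d$-exact — which it need not be in general. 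So more precisely one does not change the primitive; one simply observes that in the definition of the Poisson bracket the representative cotensor is taken to be $i_{x_k\wedge\cdots\wedge x_1}\omega$ and checks that this pair is a $d_\omega$-cocycle (using (\ref{fundamental_pairing_1}) to get $i_{[x_1,\ldots,x_k]_k}\omega = d i_{x_k\wedge\cdots\wedge x_1}\omega$ exactly) and that changing representatives of the inputs changes it by a $d_\omega$-coboundary, which is the well-definedness already handled in steps (i)–(ii). With that, all the Jacobi identities pass to the quotient and $H(A,\mathfrak{g},\omega)$ is a Lie $\I$-algebra.
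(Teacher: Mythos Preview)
Your step (i) fails for $k\geq 3$, and this breaks the whole descent strategy. Applying $d_\omega$ to the extended bracket of $d_\omega$-cocycles gives
\[
\bigl((i_{[x_1,\ldots,x_k]_k}\omega - B_{k-1}\,d\,i_{x_k\wedge\cdots\wedge x_1}\omega)_{[n]},\,0\bigr)
= \bigl((1-B_{k-1})\,d\,i_{x_k\wedge\cdots\wedge x_1}\omega_{[n]},\,0\bigr)
\]
by (\ref{fundamental_pairing_1}), and this is generically nonzero once $B_{k-1}\neq 1$. So for $k\geq 3$ the higher extended brackets do \emph{not} send cocycles to cocycles; there is no naive ``induced bracket'' to compare with the Poisson bracket. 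You partially notice the $B_{k-1}$ problem later and correctly observe that the Poisson representative $(i_{x_k\wedge\cdots\wedge x_1}\omega_{[n]},[x_1,\ldots,x_k]_k)$ \emph{is} a $d_\omega$-cocycle (precisely because the coefficient is $1$), but at that point your Jacobi argument ``all the Jacobi identities pass to the quotient'' no longer has any backing: the Poisson brackets are not the restriction of $\{\cdot,\ldots,\cdot\}^k_\omega$ to cocycles, so the weak Jacobi identity of Theorem~\ref{extended_brackets} says nothing directly about them.

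What the paper does instead is a direct verification. Well-definedness is immediate since the Poisson formula depends only on the $x_i$'s modulo $\ker\omega$. The unary Poisson bracket is zero because $[\cdot]_1=0$ and $i_x\omega$ is closed. For $k\geq 3$, the tensor component of the Jacobi expression vanishes by the Lie~$\infty$-identity for the tensor brackets, and the cotensor component
\[
\textstyle\sum_{p+q=k+1}^{p,q\geq 2}\sum_{s\in Sh(p,q-1)}e(s)\,
 i_{x_{s(k)}\wedge\cdots\wedge x_{s(p+1)}\wedge[x_{s(1)},\ldots,x_{s(p)}]_p}\omega
\]
is shown to equal $\sum_{p+q=k+1}^{p,q\geq 2}\tfrac{(k-2)!}{(q-1)!(p-2)!}\,d\,i_{x_k\wedge\cdots\wedge x_1}\omega$, hence is $d$-exact and therefore represents the zero class. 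Crucially, this uses the \emph{intermediate computation} inside the proof of Theorem~\ref{extended_brackets} (each $(p,q)$-shuffle sum is individually a multiple of $d\,i_{x_k\wedge\cdots\wedge x_1}\omega$), not merely the statement that the extended structure is Lie~$\infty$. That finer fact is exactly what your descent argument is missing.
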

\begin{proof}Since the Hamiltonian tensors $x$ of all representatives
$(f_{[n]},x)$ of a cohomology class 
$[f_{[n]},x]\in H(A,\mathfrak{g},\omega)$ are equal up to elements
of the kernel of $\omega$, 
the bracket does not depend on the particular chosen representative. 

To see that the unary bracket $\{\,\cdot\,\}_1$ is the zero operator, recall from, 
that the tensor bracket $[\,\cdot\,]_1$ is zero and that
$i_x\omega$ is closed. Then
$$\{[f_{[n]},x]\}_1=[i_x\omega,[x]_1]=[i_x\omega,0]=[0,0]\,.$$
Since $\{\,\cdot\,\}_1$ is zero, it only remains to show that the weak 
Jacobi equations
\begin{multline*}
\textstyle\sum_{p+q=k+1}^{p,q\geq 2}\sum_{s\in Sh(p,q-1)}e(s;x_1,\ldots,x_k)\,\cdot\\
\cdot\left\{\left\{[f^1_{[n]},x_1],\ldots,[f^p_{[n]},x_p]\right\}_p,
[f^{p+1}_{[n]},x_{p+1}],\ldots,[f^k_{[n]},x_k]\right\}_q=[\,0\,,0\,]
\end{multline*}
are satisfied for all $k\geq 3$ as well as 
$[f^1_{[n]},x_1],\ldots,[f^k_{[n]},x_k]\in H(A,\mathfrak{g},\omega)$ and arguing
similar to the proof of theorem \ref{extended_brackets} we need to show that 
\begin{multline*}
\textstyle\sum_{p+q=k+1}^{p,q\geq 2}\sum_{s\in Sh(p,q-1)}e(s;x_1,\ldots,x_k)
 (i_{x_{s(k)}\wedge\cdots\wedge x_{s(p+1)}\wedge 
  [x_{s(1)},\ldots,x_{s(p)}]_p}\omega)_{[n]}
\end{multline*}
is closed, since $[0,0]$ is the equivalence class of elements of the
kernel of $\omega$ and closed forms. Again we know from the proof of 
theorem \ref{extended_brackets} that this expression can be rewritten into
$$
\textstyle\sum_{p+q=k+1}^{p,q\geq 2}\frac{(k-2)!}{(q-1)!(p-2)!}
 di_{x_k\wedge\cdots\wedge x_1}\omega
$$
which then completes the proof. 
\end{proof}
\begin{remark}
Since the Poisson Lie $1$-bracket $\{\cdot\}_1$ is zero on the Hamiltonian
cohomology, the general Jacobi equation (\ref{sh_Jacobi}) simplifies for $n=3$
into the usual (strict) Jacobi equation of a graded Lie algebra. This is an
important fact as it allows Lie algebra representations and in particular momentum 
maps to be handled similar to the common symplectic case.

However the higher brackets don't vanish and so more general (weak) 
representations can be expected which are not yet part of symplectic
geometry. 
\end{remark}
\subsection{Momentum maps}In order to study Lie group actions on 
symplectic manifolds, the momentum map of
such an action was originally defined as a certain map from the
manifold into the dual of the appropriate Lie algebra. 
However it became clear that momentum maps can equivalently be seen as certain
morphisms $J:\mathfrak{g}\to C^\I(M)$ of Lie algebras,
which fit into otherwise exact, commutative diagrams 
$$
\xymatrix{
 & & & \mathfrak{g}\ar[d] \ar@{-->}[dl]_J\\ 
 0 \ar[r] & H^0(M) \ar[r] &  C^\I(M) \ar[r] &  Sym(M)\ar[r] &
  H^1(M) \ar[r] & 0
}
$$
in the category of Lie algebras, where $Sym(M)$ is the Lie algebra
of symplectic vector fields, $C^\I(M)$ a Lie algebra with
respect to the symplectic Poisson Lie bracket and the de-Rham
cohomologies $H^i(M)$ are abelian Lie algebras.

In \cite{FRZ} the authors generalizes this to an appropriate diagram in
Lie $\I$-algebras, where the function Poisson-algebra is replaced by a 
certain Lie $\I$-algebra, that can be seen as a sub structure of our
n-plectic tensor extension. We continue their generalization
to symplectic tensors and cotensors in a wider range of tensor degrees.

In the symplectic setting a momentum map (if it exists) is in general 
\textbf{only a morphism up to} $H^0(M)$. Translated to
higher symplectic geometry, this should be a morphism 
into the Hamiltonian cohomology, which then is a 
\textbf{morphism up to closed forms} only. 

Moreover since the Hamiltonian cohomology is internal to Lie $\I$-algebras, this
kind of momentum map makes sense for more general Lie $\I$-algebras, not just
for Lie algebras associated to a particular Lie group action. 

Putting this together, we are able to give a conceptual very simple definition
of a momentum map:
\begin{definition}Let $(A,\mathfrak{g},\omega)$ be an $n$-plectic
structure, with Hamiltonian cohomology $H(A,\mathfrak{g},\omega)$ and 
$(L,D_{k\in\N})$ a Lie $\I$-algebra.
Then an \textbf{n-plectic momentum map} is a weak morphism of 
Lie $\I$-algebras 
\begin{equation}
J_{\I}: L \to H(A,\mathfrak{g},\omega)\;.
\end{equation}
\end{definition}
The structure equations for general weak Lie $\I$-morphisms (see Appendix) 
take care of the properties one would expect from a momentum map. 
The interested reader is encouraged 
to show that this resamples precisely the common definition of a momentum map, 
in case $L$ is the Lie algebra of a symplectic action on an appropriate
manifold.
\section{Conclusion and Outlook}
We developed a general definition of higher symplectic structures and defined a
reasonable momentum map, to study Lie $\I$-algebra representations in this context.
However a well understood obstruction theory for momentum maps as well as a good
notion of 'higher symplectic morphisms' has still to be found. 
In \cite{FRZ} the authors started to look at the first of these problems. 
\begin{appendix}
\section{Lie $\I$-algebras} We recall the most basic stuff
about Lie $\I$-algebras. There are many incarnations of them 
\cite{LV}, but we will only look at their 
graded symmetric, 'many brackets' version, since that picture fits nicely into
the Schouten calculus and is moreover useful when it comes to actual computations. 

Lie $\I$-algebras are defined on $\Z$-graded vector spaces and 
consequently we recall them first:
\subsection{Graded Vector Spaces}
In what follows $\mathbb{K}$ will always be a field and $\Z$ the Abelian
group of integers with respect to addition. 
A $\Z$-graded $\mathbb{K}$-vector space $V$ is the direct sum 
$\oplus_{n\in \Z} V_n$ of $\mathbb{K}$-vector spaces $V_n$.  
Since this is a coprodut, there are natural injections 
$i_n:V_n \to V$ and a vector is called \textbf{homogeneous of degree} $n$ 
if it is in the image of the injection $i_n$. 
In that case we write $deg(v)$ or $|v|$ for its degree.

According to a better readable text we just write graded vector space 
as a shortcut for $\Z$-graded $\mathbb{K}$-vector space.

A morphism $f : V \to W$ of graded vector spaces, homogeneous of degree $r$,  
is a sequence of linear maps $f_n : V_n \to W_{n+r}$ for any $n\in \Z$ and
the integer $r\in\Z$ is called the degree of $f$, denoted by $deg(f)$ (or $|f|$). 

For any $n\in\N$, an $n$-multilinear map $f: V_1 \times \cdots \times V_n \to W$,
homogeneous of degree $r$ is a sequence of $n$-multilinear maps 
$f_{k} : (V_1)_{n_1} \times \ldots \times (V_k)_{n_k} \to W_{\sum n_i+r}$ for all $j_i\in \Z$ with $\sum j_i=k$.

The $\Z$-graded tensor product $V \otimes W$ of two graded vector spaces 
$V$ and $W$ is given by
$$
\textstyle\left(V \otimes W \right)_n :=
\oplus_{i+j=n}\left( V_i \otimes W_j\right)
$$
and the Koszul commutativity constraint
$\tau: V \otimes W \to  W \otimes V$ is on homogeneous elements 
$v\otimes w \in V \otimes W$ defined 
by 
$$\tau(v \otimes w):=(-1)^{deg(v)deg(w)} w \otimes v$$ and then extended to
$V\otimes W$ by linearity.

\begin{remark}
We define the symbols $e(v):=(-1)^{deg(v)}$, 
$e(v,w):=(-1)^{deg(v)deg(w)}$. The \textbf{Koszul sign} 
$e(s;v_1,\ldots,v_k) \in \{-1,+1\}$ is defined for any permutation $s\in S_k$ and
any homogeneous vectors $v_1,\ldots,v_k\in V$ by 
\begin{equation}\label{Koszul_convention}
v_1\otimes \ldots \otimes v_k= e(s;v_1,\ldots,v_k) 
 v_{s(1)}\otimes \ldots \otimes v_{s(k)}.
\end{equation}
In an actual computation it can be determined by the following rules: 
When a permutation $s\in S_k$ is a transposition  $j\leftrightarrow j+1$
of consecutive neighbors, 
then $e(s;v_1,\ldots,v_k)= (-1)^{deg(v_j)\cdot deg(v_{+1})}$ and 
if $t\in S_k$ is another permutation, then
$e(ts;v_1,\ldots,v_k)=e(t;v_{s(1)},\ldots,v_{s(k)})e(s;v_1,\ldots,v_k)$.
\end{remark}
A graded $k$-linear morphism $ f: \bigtimes^k V \to W$ is called 
\textbf{graded symmetric} if 
$$f(v_1,\ldots,v_k) = e(s;v_1,\ldots,v_k)f(v_{s(1)},\ldots,v_{s(k)})$$
for all $s\in S_k$. 
\subsection{Shuffle Permutation}
Let $S_k$ be the symmetric group, i.e the group of all bijective maps 
of the ordinal $\ordinal{k}$.
\begin{definition}[Shuffle Permutation] For any $p,q\in \N$
a $(p,q)$-shuffle is a permutation 
$s\in S_{p+q}$ with $s(1)<\ldots<s(p)$ and $s(p+1)<\ldots<s(p+q)$.
We write $Sh(p,q)$ for the set of all $(p,q)$-shuffles.

More generally for any $p_1,\ldots,p_n\in\N$ a $(p_1,\ldots,p_n)$-shuffle
is a permutation $s\in S_{p_1+\cdots+p_n}$ with  
$s(p_{j-1}+1)<\ldots<s(p_{j-1}+p_{j})$. 
We write $Sh(p_1,\ldots,p_n)$ for the set of all $(p_1,\ldots,p_n)$-shuffles.
\end{definition}
\subsection{Lie $\I$-algebas} On the structure level Lie $\I$-algebras generalize 
(differential graded) Lie-algebras to a setting where the Jacobi identity isn't 
satisfied any more, but holds up to particular higher brackets. This can be defined 
in many different ways \cite{LV}, 
but the one that works best for us is its 'graded symmetric, many bracket' version.
\begin{definition} A \textbf{Lie $\I$-algebra} $\left(V,(D_k)_{k\in \N}\right)$
is a $\Z$-graded $\R$-vector space $V$, together with a sequence $(D_k)_{k\in \N}$ of
graded symmetric, $k$-multilinear maps $D_k : \bigtimes^k V \to V$, 
homogeneous of of degree $-1$, such that the \textbf{weak Jacobi equations}
$$\label{sh_Jacobi}
\textstyle\sum_{i+j=n+1} \left(\sum_{s\in Sh(j,n-j)}
e\left(s;v_1,\ldots,v_n\right)D_i\left(D_j \left(
v_{s_1}, \ldots, v_{s_j} \right), v_{s_{j+1}}, \ldots, v_{s_n}\right)\right) = 0
$$
are satisfied for any integer $n \in \N$ and any vectors $v_1,\ldots,v_n \in V$. 
\end{definition} 
In particular Lie $\I$-algebras generalizes ordinary Lie algebras, if the grading is chosen 
right:
\begin{example}[Lie Algebra] Every Lie algebra $\left(V,[\cdot,\cdot]\right)$ is a  
Lie $\I$-algebra if we consider $V$ as concentrated in degree one and define $D_k=0$ for any $k \neq 2$ as well as $D_2(\cdot,\cdot):=[\cdot,\cdot]$.
\end{example}
Very different from common Lie theory is, that a morphism
of Lie $\I$-algebras is not necessarily just a single map. In fact such a 
morphism is a \textit{sequence} of maps, satisfying a particular structure
equation.
\begin{definition}For any two Lie $\I$-algebras $(V,(D_k)_{k\in\N})$ and 
$(W,(l_k)_{k\in\N})$ a \textbf{morphism of Lie $\I$-algebras} is a sequence
$(f_k)_{k\in\N}$ of graded symmetric, $k$-multilinear maps
$$
f_k: V \times \cdots \times V \to W
$$
homogeneous of degree $0$, such that the structure equations
\begin{multline*}\label{lie-infty-morph}
\textstyle\sum_{p+q=n+1}\left({\sum_{s\in Sh(q,p-1)}e\left(s\right)}
f_p\left(D_{q}\left(v_{s(1)},\ldots,v_{s(q)}\right),
v_{s(q+1)},\ldots, v_{s(n)}\right)\right)=\\
\textstyle\sum_p\frac{1}{p!}\sum^{k_1+\ldots +k_p =n}_{s\in Sh(k_1,\ldots,k_p)}
e\left(s\right)
l_p\left(f_{k_1}\left(v_{s(1)},\ldots,v_{s(k_1)}\right),
\ldots,
f_{k_p}\left((v_{s(n-k_p+1)},\ldots,v_{s(n)}\right)\right)
\end{multline*}
are satisfied for any $n\in\N$ and any vectors $v_1,\ldots,v_n\in V$.

The morphism is called \textbf{strict}, 
if in addition $f_k=0$ for all $k \geq 2$,that is, if the morphism is a
single map, that commutes with all brackets. 
\end{definition}
\end{appendix}
%===========================================================================================
%						***		References		***
%===========================================================================================

\end{document}